\journal{Journal of \LaTeX\ Templates}
\makeatletter \@addtoreset{equation}{section}
\newtheorem{thm}{Theorem}[section]
\newtheorem{cor}[thm]{Corollary}
\newtheorem{lem}[thm]{Lemma}
\newtheorem{prop}[thm]{Proposition}
\theoremstyle{definition}
\newtheorem{rem}[thm]{Remark}
\newtheorem{assum}[thm]{Assumption}
\newtheorem{RHP}[thm]{Riemann-Hilbert Problem}
\renewcommand{\baselinestretch}{1.25}
\begin{document}

\begin{frontmatter}

\title{Soliton resolution for a coupled generalized nonlinear Schr\"{o}dinger equations with weighted Sobolev initial data \tnoteref{mytitlenote}}
\tnotetext[mytitlenote]{
Corresponding author.\\
\hspace*{3ex}\emph{E-mail addresses}: sftian@cumt.edu.cn,
shoufu2006@126.com (S. F. Tian) }

\author{Zhi-Qiang Li, Shou-Fu Tian$^{*}$ and Jin-Jie Yang}
\address{
School of Mathematics, China University of Mining and Technology, Xuzhou 221116, People's Republic of China
}

\begin{abstract}
In this work, we employ the $\bar{\partial}$ steepest descent method in order to study
the Cauchy problem of the cgNLS equations with initial conditions in weighted Sobolev space $H^{1,1}(\mathbb{R})=\{f\in L^{2}(\mathbb{R}): f',xf\in L^{2}(\mathbb{R})\}$. The large time asymptotic behavior of the solution $u(x,t)$ and $v(x,t)$ are derived in a fixed space-time cone $S(x_{1},x_{2},v_{1},v_{2})=\{(x,t)\in\mathbb{R}^{2}: x=x_{0}+vt, ~x_{0}\in[x_{1},x_{2}], ~v\in[v_{1},v_{2}]\}$. Based on the resulting asymptotic behavior, we prove the solution resolution  conjecture of the cgNLS equations which contains the soliton term confirmed by $|\mathcal{Z}(\mathcal{I})|$-soliton on discrete spectrum and the $t^{-\frac{1}{2}}$ order term on continuous spectrum with residual error up to $O(t^{-\frac{3}{4}})$.
\end{abstract}

\begin{keyword}
Integrable system \sep The coupled generalized nonlinear Schr\"{o}dinger equations \sep Infinite conservation laws \sep Riemann-Hilbert problem \sep $\bar{\partial}$ steepest descent method \sep Soliton resolution.
\end{keyword}

\end{frontmatter}


\section{Introduction}
The classical nonlinear Schr\"{o}dinger(NLS) equation, i.e.,
\begin{align*}
       iu_{t}\pm u_{xx}+2|u|^{2}u=0,
\end{align*}
is a fundamental physical model and has been applied in various fields such as deep water waves\cite{NLS-1}, plasma physics\cite{NLS-2,NLS-3}, nonlinear optical fibers\cite{NLS-4,NLS-5}, etc. Since it plays a significant role in the field of nonlinear science, lots of researchers and scholars do many works on the NLS equation and its extensions\cite{Tian-PAMS}-\cite{Wangds-2019-JDE}. It is worth noting that in nonlinear science such as optical fibers, the NLS equation only describes the propagation of optical solitons in scalar fields of mono-mode fibers. However, because the change of nonlinear phase stems from the cross-phase modulation in multi-mode fibers or birefringent fibers, we need take the interaction of some field components at different frequencies or polarisations into consideration. The coupled nonlinear Schr\"{o}dinger system came into being to describe the characteristics of such soliton. For example, the well-known Manakov system\cite{Manakov-system}, i.e.
\begin{align*}
iu_{t}\pm \frac{1}{2}u_{xx}+(|u|^{2}+|v|^{2})u=0,\\
iv_{t}\pm \frac{1}{2}v_{xx}+(|u|^{2}+|v|^{2})v=0,
\end{align*}
can be used to describe the intense electromagnetic pulse propagation in birefringent fiber.

In this work, considering that there are more effects of nonlinear factors in practise, we derive a novel integrable coupled generalized nonlinear Schr\"{o}dinger(cgNLS) equation that reads
\begin{gather}
\begin{split}
iu_{t}-i\alpha u_{x}+u_{xx}-2u^{2}v+4\beta^{2}u^{3}v^{2}+4i\beta (uv)_{x}u+\gamma u=0,\\
iv_{t}-i\alpha v_{x}-v_{xx}+2uv^{2}-4\beta^{2}u^{2}v^{3}+4i\beta (uv)_{x}v-\gamma v=0,
\end{split}\label{1.1}
\end{gather}
where $u(x,t)$ and $v(x,t)$ are complex function of variables $x$, $t$, and $\alpha$, $\beta$, $\gamma$ are arbitrary constants. Additionally, we will derive the Lax pair, infinite conservation laws and Hamiltonian function associated with the cgNLS equation.
Furthermore, we investigate the soliton resolution of the cgNLS equation with the initial value condition
\begin{align}
  u(x,0)=u_{0}(x),v(x,0)=v_{0}(x)\in H^{1,1}(\mathbb{R}),
\end{align}
where $H^{1,1}(\mathbb{R})$ is  a weighted Sobolev space, i.e.,
\begin{align}\label{Sobolev-space}
 H^{1,1}(\mathbb{R})=\{f\in L^{2}(\mathbb{R}): f',xf\in L^{2}(\mathbb{R})\}.
\end{align}
In particular, some unique cases of \eqref{1.1} can be obtained by fixing the coefficients $\alpha$, $\beta$ and $\gamma$.
\begin{itemize}
  \item When $\alpha=\gamma=0$ and $\beta\neq 0$, Eq.\eqref{1.1} degenerates into the coupled nonlinear Kundu-Eckhaus equation, i.e.,
      \begin{align*}
       iu_{t}+u_{xx}-2u^{2}v+4\beta^{2}u^{3}v^{2}+4i\beta (uv)_{x}u=0,\\
       iv_{t}-v_{xx}+2uv^{2}-4\beta^{2}u^{2}v^{3}+4i\beta (uv)_{x}v=0,
      \end{align*}
      which is a completely integrable system and possesses a Lax representation, the Hamiltonian structure and some other properties\cite{K-E-geng}-\cite{K-E-JPSJ}.
  \item When $\alpha\neq0$, $\gamma\neq0$, $\beta=0$, and $v=-u^{*}$, Eq.\eqref{1.1} degenerates into the modified Landau-Lifshitz(mLL) equation, i.e.,
      \begin{align*}
       iu_{t}-i\alpha u_{x}+u_{xx}+2|u|^{2}u+\gamma u=0,
      \end{align*}
      which can be employed to depict the dynamic behavior of local magnetization in electromagnetics. Also, a research for the mLL equation reveals the accumulation of energy plays an important role in the generation of magnetic rogue waves\cite{mll-1}. Furthermore, the long time asymptotic behavior of the mLL equation with nonzero boundary condition have been studied\cite{mll-peng}.
  \item When $\alpha=\gamma=\beta=0$, and $v=-u^{*}$, Eq.\eqref{1.1} degenerates into the classical focusing nonlinear Schr\"{o}dinger(NLS) equation, i.e.,
      \begin{align*}
       iu_{t}+u_{xx}+2|u|^{2}u=0,
      \end{align*}
      which has played an important part in nonlinear science.
\end{itemize}
It is not hard to check that the cgNLS equation is not self-adjoint, so that the soliton solution will emerge for zero boundary conditions. Therefore, it is significant to consider the question that how to obtain the long time asymptotic behavior of the cgNLS equation in the domain of solution solutions.

In order to obtain the long time behavior of nonlinear evolution equations, researchers have done a series work. In 1974, using the inverse scattering method, the long time behavior of nonlinear wave equation was first carried out by Manakov \cite{Manakov-1974}. In 1976, the long time asymptotic solutions of nonlinear Schr\"{o}dinger(NLS) equation with decaying initial value was obtained by Zakharov and Manakov \cite{Zakharov-1976}. In 1993, a nonlinear steepest descent method was developed to study the modified Korteweg-de Vries(MKdV) equation and obtained the solution rigorously by Defit and Zhou \cite{Deift-1993}. Then, this approach has been applied widespread. However, it has been shown in literature \cite{Deift-1994-1, Deift-1994-2} that if the initial value is smooth and decay fast enough then the error term is $o(\frac{\log t}{t})$. Then, after the hard work of many researchers \cite{Deift-2003}, the error term becomes $O(t^{-(\frac{1}{2}+\iota)})$ for any $0<\iota<\frac{1}{4}$ with the condition that the initial value belongs to the weighted Sobolev space \eqref{Sobolev-space}.

In recent years, McLaughlin and Miller have made contributions to develop the method for the long time asymptotic analysis of Riemann-Hilbert problem(RHP). Combining steepest descent with $\bar{\partial}$-problem, they presented a $\bar{\partial}$ steepest descent method which has been used to analyse the long time asymptotic behavior of series equations \cite{McLaughlin-1, McLaughlin-2}. Since then, lots of significant works have been done by applying this method. For example, with the assumption of finite mass initial data, Dieng and McLaughlin studied the defocusing NLS equation \cite{Dieng-2008}; with finite density initial data, the defocusing NLS equation was investigated by Cuccagna and Jenkins \cite{Cuccagna-2016}; Borghese and Jenkins studied the Cauchy problem for the focusing NLS equation via applying this method \cite{AIHP}. Of course, there are a series of great work about this $\bar{\partial}$ steepest descent method \cite{Jenkins}-\cite{Faneg-3}.
Compared with the nonlinear steepest descent method, it is not necessary to study the delicate estimates involving $L^{p}$ estimates of Cauchy projection operators by applying the $\bar{\partial}$ steepest descent method. Also, it improves the error estimates, i.e., if the initial value belongs to the weighted Sobolev space \eqref{Sobolev-space}, the error becomes $o(t^{-\frac{3}{4}})$ which has been shown in \cite{Dieng-2008}.

In this work, we are going to use the $\bar{\partial}$ steepest descent method to study the long time asymptotic behavior of the cgNLS equation with the initial value $u_{0}(x)$ and $v_{0}(x)$ which belong to the weighted Sobolev space \eqref{Sobolev-space}.

The outline of this work is as follows. In section 2, based on the Lax pair of the cgNLS equation, we introduce two kinds of eigenfunction and scattering matrix. Also, the analytical, symmetries and asymptotic properties are analyzed. Then the Riemann-Hilbert problem is constructed for the cgNLS equation with initial problem. In section 3, we introduce the matrix function $T(z)$ to separate the jump matric which is defined in \eqref{RH-1} near the phrase point $z_{0}=-\frac{1}{4}\left(\frac{x}{t}+\alpha\right)$. In section 4, we make the continuous extension of the jump matrix off the real axis by introducing a matrix function $R^{(2)}(z)$ and get a mixed $\bar{\partial}$-Riemann-Hilbert(RH) problem. In section 5, we decompose the mixed $\bar{\partial}$-RH problem into two parts which are a model RH problem with $\bar{\partial}R^{(2)}=0$ and a pure $\bar{\partial}$-RH problem with $\bar{\partial}R^{(2)}\neq0$, i.e., $M_{RHP}$ and $M^{(3)}$. In section 6, we solve the model RH problem $M_{RHP}$  via an outer model $M^{(out)}(z)$ for the soliton part and an inner model near the phase point $z_{0}$ which can be solved by a parabolic cylinder model problem. Also, the error function $E(z)$ with a small-norm RH problem is obtained. In section 7, the pure $\bar{\partial}$-RH problem is studied. Finally, in section 8, we obtain the soliton resolution and long time asymptotic behavior of the cgNLS equation.

\section{The spectral analysis of cgNLS equation}

The Lax pair of the equation \eqref{1.1} reads
\begin{gather}
\begin{split}
\psi_{x}=U\psi,\quad \psi_{t}=V\psi, \\
\psi=(\psi_{1},\psi_{2})^{T}, \label{2.1}
\end{split}
\end{gather}
where $\psi_{i}, (i=1,2)$ are eigenfunctions, and
\begin{align}
\begin{split}
U&=-iz\sigma_{3}-i\beta uv\sigma_{3}+U_{0},  \\
V&=-2iz^{2}\sigma_{3}+a_{1}\sigma_{3}+2zU_{0}-i(U_{0})_{x}\sigma_{3}-2\beta U_{0}^{3}+\alpha U_{0}
\end{split}
\end{align}
with $U_{0}=\left(\begin{array}{cc}
0 & u\\
v & 0
\end{array}\right)$, $\sigma_{3}=\left(\begin{array}{cc}
1 & 0\\
0 & -1
\end{array}\right)$ and $a_{1}=\beta(u_{x}v-uv_{x})+4i\beta^{2}u^{2}v^{2}-i(1-\alpha\beta)uv$.
We can check that $U, V$ satisfy the zero curvature equation $U_{t}-V_{x}+[U,V]=0$, which is the compatibility condition of \eqref{1.1}.

Considering the initial conditions that $u_{0}(x), v_{0}(x)\in H^{1,1}(\mathbb{R})$, letting $x\rightarrow\pm\infty$, we can get an asymptotic scattering problem and construct the two Jost solutions, i.e., $\psi_{\pm}\sim e^{-i(zx+(2z^{2}+\alpha z-\frac{1}{2}\gamma)t)\sigma_{3}}$.

 Then, we introduce $m_{\pm}(x,t;z)$ which satisfies that
\begin{align}\label{2.2}
m_{\pm}(x,t;z)=\psi_{\pm}(x,t;z)e^{i(zx+(2z^{2}+\alpha z-\frac{1}{2}\gamma)t)\sigma_{3}}.
\end{align}
The equivalent Lax pair of Eq.\eqref{2.1} can be derived as
\begin{align}
\begin{split}
m_{x}&+iz[\sigma_{3},m]=(-i\beta uv\sigma_{3}+U_{0})m\triangleq U_{1}m,\\
m_{t}&+i(2z^{2}+\alpha z-\frac{1}{2}\gamma)[\sigma_{3},m]=(a_{1}\sigma_{3}+2zU_{0}-i(U_{0})_{x}\sigma_{3}-2\beta U_{0}^{3}+\alpha U_{0})m\triangleq V_{1}m,\label{2.3}
\end{split}
\end{align}
which can be written in full derivative form, i.e.,
\begin{align}\label{fullDerivate1}
d(e^{i(zx+(2z^{2}+\alpha z-\frac{1}{2}\gamma)t)\sigma_{3}}m)=e^{i(zx+(2z^{2}+\alpha z-\frac{1}{2}\gamma)t)\sigma_{3}}(U_{1}dx+V_{1}dt)m.
\end{align}

Evaluating a solution of Eq.\eqref{fullDerivate1} with the form
\begin{align}
m=\mathcal{X}+\frac{m_{1}}{z}+\frac{m_{2}}{z^{2}}+o(z^{-3}), ~~z\rightarrow\infty,\label{2.4}
\end{align}
where $\mathcal{X}$, $m_{1}$ and $m_{2}$ are independent of variable $z$. Substituting Eq.\eqref{2.4} into Eq.\eqref{2.3}, and  comparing the coefficients of the same power of $z$, we show that
\begin{align}\label{2.5}
\begin{split}
\mathcal{X}_{x}&=(-i\beta uv\sigma_{3})\mathcal{X},\\
\mathcal{X}_{t}&=(\beta(u_{x}v-uv_{x})+4i\beta^{2}u^{2}v^{2}-i\alpha\beta uv)\sigma_{3}\mathcal{X}.
\end{split}
\end{align}
We note that Eq.\eqref{1.1} possesses the conservation law
\begin{align}
(-i\beta uv)_{t}=(\beta(u_{x}v-uv_{x})+4i\beta^{2}u^{2}v^{2}-i\alpha\beta uv)_{x}.
\end{align}
Therefore, if we define
\begin{align}
\mathcal{X}(x,t)=e^{i\int_{(-\infty,\infty)}^{(x,t)}\Delta\sigma_{3}},
\end{align}
where
\begin{align}
\Delta(x,t)=-\beta uvdx+(-i\beta(u_{x}v-uv_{x})+4\beta^{2}u^{2}v^{2}-\alpha\beta uv)dt,
\end{align}
the two equations in \eqref{2.5} for $\mathcal{X}$ are consistent and are both satisfied.
Now, introducing a new function $\mu$
\begin{align}
m(x,t,z)=e^{i\int_{(-\infty,\infty)}^{(x,t)}\Delta\sigma_{3}}\mu(x,t,z).
\end{align}
It is obvious that
\begin{align}\label{J-1}
\mu=I+O(\frac{1}{z}),\quad z\rightarrow\infty.
\end{align}
Then, from Eq.\eqref{fullDerivate1}, we can derive that
\begin{align}\label{fullDerivate2}
d(e^{i(zx+(2z^{2}+\alpha z-\frac{1}{2}\gamma)t)\sigma_{3}}\mu)
=e^{i(zx+(2z^{2}+\alpha z-\frac{1}{2}\gamma)t)\sigma_{3}}(U_{2}dx+V_{2}dt)\mu.
\end{align}
where
\begin{gather}
      U_{2}=e^{-i\int_{(-\infty,\infty)}^{(x,t)}\Delta\hat{\sigma}_{3}}\left(\begin{array}{cc}
0& u \\
v & 0
\end{array}\right),\notag\\ V_{2}=e^{-i\int_{(-\infty,\infty)}^{(x,t)}\Delta\hat{\sigma}_{3}}\left(\begin{array}{cc}
iuv & 2zu+iu_{x}-2\beta u^{2}v+\alpha u\\
 2zv-iv_{x}-2\beta uv^{2}+\alpha v & -iuv
\end{array}\right).\notag
\end{gather}
Also, the Lax pair \eqref{2.3} can be transformed into
\begin{gather}
\mu_{x}+iz[\sigma_{3},\mu]=U_{2}\mu,\label{2.6}\\
\mu_{t}+i(2z^{2}+\alpha z-\frac{1}{2}\gamma)[\sigma_{3},\mu]=V_{2}\mu.\label{2.7}
\end{gather}
Based on the Eq.\eqref{fullDerivate2}, we can select two special integration paths i.e., $(-\infty,t)\rightarrow(x,t)$ and $(+\infty,t)\rightarrow(x,t)$, and obtain the following Volterra type integrals
\begin{align}
\begin{matrix}
\mu_{\pm}(x,t;z)=\mathbb{I}+\int_{\pm\infty}^{x}e^{-iz(x-y)\hat{\sigma}_{3}}[U_{2}(y,t;z)\mu_{-}(y,t;z)]\, dy,
\end{matrix}
\end{align}
from which we can derive the analytical properties of $\mu_{\pm}$.
\begin{itemize}
  \item It is assumed that $u(x)-u_{0}, v(x)-v_{0}\in H^{1,1}(\mathbb{R})$. Then, $\mu_{-,1}, \mu_{+,2}$ are analytic in $C_{+}$ and $\mu_{-,2}, \mu_{+,1}$ are analytic in $C_{-}$ and they can be recorded as $\mu^{+}_{-,1}, \mu^{+}_{+,2}, \mu^{-}_{-,2}, \mu^{-}_{+,1}$, respectively. The $\mu_{\pm,j} (j=1,2)$ mean the $j$-th column of $\mu_{\pm}$.
\end{itemize}
For $z\in \mathbb{R}$ both $\psi_{+}$ and $\psi_{-}$ are two fundamental matrix solutions of the scattering problem. Therefore,
\begin{gather}
\psi_{+}(x,t;z)=\psi_{-}(x,t;z)S(z),\quad x,t\in\mathbb{R}, ~~~z\in \mathbb{R},\label{2.8}
\end{gather}
where $S(z)$ is a $2\times2$ matrix and independent of the variable $x$ and $t$.
According to the Abel's theorem \cite{Li-1} and the Lax pair \eqref{2.1}, we know that $(\det \psi)_{x}=(\det \psi)_{t}=0$. Furthermore, we have $\det \psi_{\pm}=\det \mu_{\pm}=1$.Then, we can derive that $\psi_{\pm}$ are reversible. Therefore, we obtain that
\begin{align}\label{2.9}
\begin{split}
&s_{11}(z)=Wr\left(\psi_{+,1},\psi_{-,2}\right),\quad
s_{22}(z)=Wr\left(\psi_{-,1},\psi_{+,2}\right),\\
&s_{12}(z)=Wr\left(\psi_{+,2},\psi_{-,2}\right),\quad
s_{21}(z)=Wr\left(\psi_{-,1},\psi_{+,1}\right),
\end{split}
\end{align}
where $\psi_{\pm,j}$ mean the $j$-column of $\psi_{\pm}$, respectively, and $Wr$ infers to the Wronskians determinant.
Then, according to the analytical properties of $\mu_{\pm}$ and the relationship between $\psi_{\pm}$ and $\mu_{\pm}$, we obtain that $s_{11}$ is analytic in $\mathbb{C}^{+}$, and $s_{22}$ is analytic in $\mathbb{C}^{-}$.

It is easy to check that if $\mu(x,t;z)$ is the solution of Eq.\eqref{2.6} then $-\sigma \mu^{*}(x,z^{*})\sigma$ also is the solution of Eq.\eqref{2.6} and it follows
\begin{gather}
\mu_{\pm}(z)=-\sigma \mu^{*}_{\pm}(z^{*})\sigma. \label{2.10}
\end{gather}
Then, it is obvious that
\begin{gather}
S(z)=-\sigma S^{*}(z^{*})\sigma. \label{2.11}
\end{gather}
Based on Eq.\eqref{J-1} and Eq.\eqref{2.8}, we can derive that $S(z)\rightarrow \mathbb{I}$, $z\rightarrow\infty$.

\begin{assum}\label{assum}
In the following analysis, we make the assumption to avoid the many pathologies possible, i.e.,
\begin{itemize}
  \item For $z\in\mathbb{R}$, no spectral singularities exist, i.e, $s_{11}(z)\neq0$;
  \item Suppose that $s_{11}(z)$ has $N$ zero points, denoted as $\mathcal{Z}=\{(z_{j},Imz_{j}>0)^{N}_{j=1}\}$. So that $s_{22}(\lambda)$ has $N$ zero points $\mathcal{Z}^{*}=\{(z^{*}_{j},Imz^{*}_{j}<0)^{N}_{j=1}\}$.
  \item The discrete spectrum is simple, i.e., if $z_{0}$ is the zero of $s_{11}(z)$, then $s'_{11}(z_{0})\neq0$.
\end{itemize}
\end{assum}

Furthermore, based on Eq.\eqref{2.9} and Eq.\eqref{2.11}, there exists norming constants $c_{j}$ such that
\begin{equation}
\mu_{-,1}(z_{j})=c_{j}e^{2it\theta(z_{j})}\mu_{+,2}(z_{k});~
\mu_{-,2}(z^{*}_{j})=-c^{*}_{j}e^{-2it\theta(z^{*}_{j})}\mu_{+,1}(z^{*}_{j}),\notag
\end{equation}
where $\theta(z)=(z\frac{x}{t}+2z^{2}+\alpha z-\frac{1}{2}\gamma)$.

Now, we introduce a sectionally  meromorphic matrices
\begin{align}\label{Matrix}
M(x,t;z)=\left\{\begin{aligned}
&M^{+}(x,t;z)=\left(\frac{\mu_{-,1}(x,t;z)}{s_{11}(z)},\mu_{+,2}(x,t;z)\right), \quad z\in \mathbb{C}^{+},\\
&M^{-}(x,t;z)=\left(\mu_{+,1}(x,t;z),\frac{\mu_{-,2}(x,t;z)}{s_{22}(z)}\right), \quad z\in \mathbb{C}^{-},
\end{aligned}\right.
\end{align}
where $M^{\pm}(x,t;z)=\lim\limits_{\varepsilon\rightarrow0^{+}}M(x,t;z\pm i\varepsilon),~\varepsilon\in\mathbb{R}$, and  reflection coefficients
\begin{gather}
r(z)=\frac{s_{21}(z)}{s_{11}(z)},~~
 \frac{s_{12}(z)}{s_{22}(z)}=-\frac{s^{*}_{21}(z^{*})}{s^{*}_{11}(z^{*})}
 =-r^{*}(z^{*})=-r^{*}(z),~~z\in\mathbb{R}.
\end{gather}

Based on the above analysis, the matrix function $M(x,t;z)$ admits the following matrix RHP.
\begin{RHP}\label{RH-1}
Find an analysis function $M(x,t;z)$ with the following properties:
\begin{itemize}
  \item $M(x,t;z)$ is meromorphic in $C\setminus\mathbb{R}$.
  \item $M^{+}(x,t;z)=M^{-}(x,t;z)G(x,t;z)$,~~~$z\in\mathbb{R}$,
  where \begin{align}\label{J-Matrix}
G(x,t;z)=\left(\begin{array}{cc}
                   1+|r(z)|^{2} & r^{*}(z)e^{-2it\theta(z)} \\
                   r(z)e^{2it\theta(z)} & 1
                 \end{array}\right);
\end{align}
  \item $M(z)=I+O(z^{-1})$ as $z\rightarrow\infty$;
  \item At each $z_{j}\in\mathcal{Z}$ and $z^{*}_{j}\in\mathcal{Z}^{*}$,  $M(z)$ satisfies the residue condition, i.e.,
      \begin{align}\label{2.12}
\mathop{Res}_{z=z_{j}}M=\lim_{z\rightarrow z_{j}}M\left(\begin{array}{cc}
                   0 & 0 \\
                   c_{j}e^{2it\theta} & 0
                 \end{array}\right),
\mathop{Res}_{z=z^{*}_{j}}M=\lim_{z\rightarrow z^{*}_{j}}M\left(\begin{array}{cc}
                   0 & -c^{*}_{j}e^{-2it\theta} \\
                   0 & 0
                 \end{array}\right).
\end{align}
\end{itemize}
\end{RHP}
\begin{rem}
By referring to the Zhou's vanishing lemma, the existence of the solutions of RHP \ref{RH-1} for $(x,t)\in\mathbb{R}^{2}$ is guaranteed. According to a consequence of Liouville's theorem, we know that if a solution exists, it is unique.
\end{rem}

Then, expanding this solution as $z\rightarrow\infty$ and combining with Eq.\eqref{2.6}, we can reconstruct the solution of cgNLS equation as
\begin{align}\label{2.13}
\begin{split}
u(x,t)=2i \lim_{z\rightarrow \infty}(zM)_{12}e^{2i\int_{(-\infty,\infty)}^{(x,t)}\Delta},~~
v(x,t)=2i \lim_{z\rightarrow \infty}(zM)_{21}e^{-2i\int_{(-\infty,\infty)}^{(x,t)}\Delta}.
\end{split}
\end{align}

\section{Conjugation}
In this section, in order to renormalize the Riemann-Hilbert problem\eqref{RH-1}, we introduce a function to establish a transformation $M\mapsto M^{(1)}$.

In jump matrix \eqref{J-Matrix}, the oscillation term is $e^{2it\theta(z)}$, from which we find a phase point
\begin{align*}
z_{0}=-\frac{1}{4}\left(\frac{x}{t}+\alpha\right).
\end{align*}
Consequently, $\theta(z)$ can be written as
\begin{align}\label{theta}
\theta(z)=2z^{2}-4z_{0}z-\frac{1}{2}\gamma=2(z-z_{0})^{2}-2z^{2}_{0}-\frac{1}{2}\gamma.
\end{align}
And we have $Re(2i\theta)=-8(Rez-z_{0})Imz$. Then, we derive the decaying domains of the oscillation term.

\centerline{\begin{tikzpicture}[scale=0.7]
\path [fill=yellow] (0,4)--(0,0) to (4,0) -- (4,4);
\path [fill=yellow] (0,-4)--(0,0) to (-4,0) -- (-4,-4);
\draw[-][thick](-4,0)--(-3,0);
\draw[-][thick](-3,0)--(-2,0);
\draw[-][thick](-2,0)--(-1,0);
\draw[-][thick](-1,0)--(0,0);
\draw[-][thick](0,0)--(1,0);
\draw[-][thick](1,0)--(2,0);
\draw[-][thick](2,0)--(3,0);
\draw[->][thick](3,0)--(4,0)[thick]node[right]{$Rez$};
\draw[<-][thick](0,4)[thick]node[right]{$Imz$}--(0,3);
\draw[-][thick](0,3)--(0,2);
\draw[-][thick](0,2)--(0,1);
\draw[-][thick](0,1)--(0,0);
\draw[-][thick](0,0)--(0,1);
\draw[-][thick](0,1)--(0,2);
\draw[-][thick](0,2)--(0,3);
\draw[-][thick](0,3)--(0,4);
\draw[-][thick](0,0)--(0,-1);
\draw[-][thick](0,-1)--(0,-2);
\draw[-][thick](0,-2)--(0,-3);
\draw[-][thick](0,-3)--(0,-4);
\draw[fill] (2,1.5)node[below]{\small{$|e^{2i\theta(z)}|\rightarrow0$}};
\draw[fill] (-2.5,-1.5)node[below]{\small{$|e^{2i\theta(z)}|\rightarrow0$}};
\draw[fill] (2,-1.5)node[below]{\small{$|e^{-2i\theta(z)}|\rightarrow0$}};
\draw[fill] (-2.5,1.5)node[below]{\small{$|e^{-2i\theta(z)}|\rightarrow0$}};
\end{tikzpicture}}
\centerline{\noindent {\small \textbf{Figure 1.} Exponential decaying domains.}}

To make the following analysis more convenient, we introduce some notations.
\begin{align}\label{2.14}
\begin{aligned}
&\triangle^{-}_{z_{0}}=\{k\in\{1,\cdots,N\}|Re(z_{k})<z_{0}\},\\
&\triangle^{+}_{z_{0}}=\{k\in\{1,\cdots,N\}|Re(z_{k})>z_{0}\}.
\end{aligned}
\end{align}
For $\mathcal{I}=[a,b]$, define
\begin{align*}
&\mathcal{Z}(\mathcal{I})=\{z_{k}\in\mathcal{Z}:Rez_{k}\in\mathcal{I}\},\\
&\mathcal{Z}^{-}(\mathcal{I})=\{z_{k}\in\mathcal{Z}:Rez_{k}<a\},\\
&\mathcal{Z}^{+}(\mathcal{I})=\{z_{k}\in\mathcal{Z}:Rez_{k}>b\}.
\end{align*}
For $z_{0}\in\mathcal{I}$, define
\begin{align*}
&\triangle^{-}_{z_{0}}(\mathcal{I})=\{k\in\{1,\cdots,N\}:a\leq Rez_{k}<z_{0}\},\\
&\triangle^{+}_{z_{0}}(\mathcal{I})=\{k\in\{1,\cdots,N\}:a<Rez_{k}\leq z_{0}\}.
\end{align*}

Next, we introduce the function
\begin{align*}
\delta(z)=\exp[i\int_{-\infty}^{z_{0}}\frac{\nu(s)}{s-z}ds],~~\nu(s)=-\frac{1}{2\pi}\log(1+|r(s)|^{2}).
\end{align*}
and
\begin{align}\label{2.15}
T(z)=\prod_{k\in\Delta_{z_{0}}^{-}}\frac{z-z^{*}_{k}}{z-z_{k}}\delta(z),
\end{align}
which has the following properties.
\begin{prop}\label{T-property} $T(z)$ admits that\\
($a$) $T$ is meromorphic in $C\setminus(-\infty, z_{0}]$;\\
($b$) For $z\in C\setminus(-\infty,z_{0}]$, $T^{*}(z^{*})=\frac{1}{T(z)}$;\\
($c$) For $z\in (-\infty,z_{0}]$, the boundary values $T_{\pm}$ satisfy
\begin{align}\label{2.16}
T_{+}(z)/T_{-}(z)=1+|r(z)|^{2}, z\in (-\infty,z_{0}];
\end{align}
($d$) As $|z|\rightarrow \infty $ with $|arg(z)|\leq c<\pi$,
\begin{align}\label{2.17}
T(z)=1+\frac{i}{z}[2\sum_{k\in\Delta_{z_{0}}^{-}}Imz_{k}-\int_{-\infty}^{z_{0}}\nu(s)ds]+O(z^{-2});
\end{align}
($e$) As $z\rightarrow z_{0}$ along any ray $z_{0}+e^{i\phi}R_{+}$ with $|\phi|\leq c<\pi$
\begin{align}\label{2.18}
|T(z-z_{0})-T_{0}(z_{0})(z-z_{0})^{i\nu(z_{0})}|\leq C\parallel r\parallel_{H^{1}(R)}|z-z_{0}|^{\frac{1}{2}},
\end{align}
where $T_{0}(z_{0})$ is the complex unit
\begin{align}\label{2.19}
\begin{split}
&T_{0}(z_{0})=\prod_{k\in\Delta_{z_{0}}^{-}}(\frac{z_{0}-z^{*}_{k}}{z_{0}-z_{k}})e^{i\beta(z_{0},z_{0})},\\
&\beta(z,z_{0})=-\nu(z_{0})\log(z-z_{0}+1)+\int_{-\infty}^{z_{0}}\frac{\nu(s)-\chi(s)\nu(z_{0})}{s-z}ds,
\end{split}
\end{align}
with $\chi(s)=1$ as $s\in(z_{0}-1, z_{0})$, and $\chi(s)=0$ as  $s\in(-\infty, z_{0}-1]$.
\end{prop}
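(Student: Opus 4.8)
The plan is to write $T(z)=B(z)\,\delta(z)$, where $B(z)=\prod_{k\in\Delta_{z_0}^{-}}\frac{z-z_k^{*}}{z-z_k}$ is the finite Blaschke-type factor and $\delta(z)=\exp[\,i\int_{-\infty}^{z_0}\frac{\nu(s)}{s-z}\,ds\,]$ is the Cauchy-type exponential, and to establish (a)--(e) by analysing the two factors separately and then multiplying. Throughout I will use three elementary properties of $\nu(s)=-\frac{1}{2\pi}\log(1+|r(s)|^{2})$: (i) it is real-valued on $\mathbb{R}$; (ii) $|\nu(s)|\le\frac{1}{2\pi}|r(s)|^{2}\in L^{1}(\mathbb{R})$, so the defining integral converges absolutely and $\delta$ is bounded on $\mathbb{C}\setminus(-\infty,z_0]$; and (iii) $r\in H^{1}(\mathbb{R})\hookrightarrow C^{1/2}(\mathbb{R})$, hence $\nu\in C^{1/2}(\mathbb{R})$ with $\|\nu\|_{C^{1/2}}\lesssim\|r\|_{H^{1}}$.

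Properties (a)--(d) are then quick. For (a): each factor $\frac{z-z_k^{*}}{z-z_k}$ is rational with its only pole at $z_k$ and its only zero at $z_k^{*}$, and $k\in\Delta_{z_0}^{-}$ forces $\mathrm{Re}\,z_k<z_0$ and $\mathrm{Im}\,z_k>0$, so neither point lies on $(-\infty,z_0]$; moreover $\int_{-\infty}^{z_0}\frac{\nu(s)}{s-z}\,ds$ is a Cauchy integral of an $L^{1}$ density on $(-\infty,z_0]$, hence analytic in $z$ off that ray, and composing with $\exp$ preserves analyticity, so $T$ is meromorphic on $\mathbb{C}\setminus(-\infty,z_0]$. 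For (b): conjugating and using (i), $\overline{\delta(\bar z)}=\exp[\,-i\int_{-\infty}^{z_0}\frac{\nu(s)}{s-z}\,ds\,]=1/\delta(z)$ and $\overline{B(\bar z)}=\prod_k\frac{z-z_k}{z-z_k^{*}}=1/B(z)$, so $T^{*}(z^{*})=1/T(z)$. For (c): $B$ is analytic across $(-\infty,z_0]$ (its zeros and poles lie strictly off the real axis), while the Sokhotski--Plemelj formula gives $\log\delta_{\pm}(z)=i\,\mathrm{P.V.}\!\int_{-\infty}^{z_0}\frac{\nu(s)}{s-z}\,ds\mp\pi\nu(z)$ for $z\in(-\infty,z_0)$, whence $\delta_{+}/\delta_{-}=e^{-2\pi\nu(z)}=1+|r(z)|^{2}$, which with $B_{+}=B_{-}$ gives \eqref{2.16}. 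For (d): when $|z|\to\infty$ with $|\arg z|\le c<\pi$ one has $\mathrm{dist}(z,(-\infty,z_0])\gtrsim_{c}|z|$, so $\frac{z-z_k^{*}}{z-z_k}=1+\frac{2i\,\mathrm{Im}\,z_k}{z}+O(z^{-2})$ and, splitting the $s$-integral at $|s|=|z|/2$ and using (ii), $\delta(z)=1-\frac{i}{z}\int_{-\infty}^{z_0}\nu(s)\,ds+O(z^{-2})$; multiplying out yields \eqref{2.17}.

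The substantive step is (e). First I would establish the factorization $\delta(z)=(z-z_0)^{i\nu(z_0)}e^{i\beta(z,z_0)}$ with $\beta$ as in \eqref{2.19}: inserting $\nu(s)=\big(\nu(s)-\chi(s)\nu(z_0)\big)+\chi(s)\nu(z_0)$ into the Cauchy integral, the $\chi$-part integrates to $i\nu(z_0)\big(\log(z-z_0)-\log(z-z_0+1)\big)$ (on the branch with cut along $(-\infty,z_0]$), contributing the power $(z-z_0)^{i\nu(z_0)}$ and the $-\nu(z_0)\log(z-z_0+1)$ term of $\beta$, while the leftover is precisely $\int_{-\infty}^{z_0}\frac{\nu(s)-\chi(s)\nu(z_0)}{s-z}\,ds$, the second term of $\beta$ (convergent at $s=z_0$ by (iii)). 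Setting $z=z_0$ gives the real constant $\beta(z_0,z_0)$; combined with $|B(z_0)|=1$ and with $B$ analytic at the real point $z_0$ (its zeros/poles are bounded away from $z_0$, so $B(z)=B(z_0)+O(|z-z_0|)$), this identifies $T_0(z_0)=B(z_0)e^{i\beta(z_0,z_0)}$, a number of modulus one, consistently with (b). It then remains to bound $|\beta(z,z_0)-\beta(z_0,z_0)|$ as $z\to z_0$ along a ray $z_0+e^{i\phi}\mathbb{R}_{+}$, $|\phi|\le c<\pi$: the $\log(z-z_0+1)$ part is analytic, hence Lipschitz, near $z_0$, and for the singular integral I would write the difference as $(z-z_0)\int_{-\infty}^{z_0}\frac{\nu(s)-\chi(s)\nu(z_0)}{(s-z)(s-z_0)}\,ds$, split it at $|s-z_0|=2|z-z_0|$, and estimate the two pieces using $|\nu(s)-\chi(s)\nu(z_0)|\lesssim\|r\|_{H^{1}}|s-z_0|^{1/2}$ near $z_0$ (and $|\nu(s)|\lesssim|r(s)|^{2}$ away from it) together with the elementary geometric bound $|s-z|\gtrsim_{c}\max\{|s-z_0|,|z-z_0|\}$ on $(-\infty,z_0]$, valid for $|\phi|\le c<\pi$; both pieces come out $O(\|r\|_{H^{1}}|z-z_0|^{1/2})$, as in \cite{Dieng-2008}. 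Since $|(z-z_0)^{i\nu(z_0)}|=e^{-\nu(z_0)\phi}$ is bounded on $|\phi|\le c$, assembling $T=B\delta$ gives \eqref{2.18}.

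The main obstacle is precisely this last Hölder-$\tfrac{1}{2}$ estimate in (e): one has to control the near-diagonal contribution of the Cauchy integral while keeping the constant in the explicit form $C\|r\|_{H^{1}}$, which is what forces the Sobolev embedding $H^{1}\hookrightarrow C^{1/2}$ and the restriction $|\phi|\le c<\pi$ on the approach direction (guaranteeing $|s-z|\gtrsim|s-z_0|$ on the integration ray $(-\infty,z_0]$). A secondary but unavoidable point is the branch bookkeeping that isolates the factor $(z-z_0)^{i\nu(z_0)}$ and thereby pins down the exact constant $T_0(z_0)$ and its unit modulus; parts (a)--(d) are otherwise routine once properties (i)--(iii) of $\nu$ are in hand.
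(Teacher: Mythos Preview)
Your proposal is correct and follows essentially the same route as the paper: write $T=B\delta$, dispose of (a)--(d) by direct inspection of the two factors, and for (e) isolate the singular power $(z-z_0)^{i\nu(z_0)}$ via the decomposition $\nu(s)=(\nu(s)-\chi(s)\nu(z_0))+\chi(s)\nu(z_0)$, then reduce everything to the H\"older bound $|\beta(z,z_0)-\beta(z_0,z_0)|\lesssim\|r\|_{H^1}|z-z_0|^{1/2}$. The only difference is that the paper simply invokes this last inequality as a lemma from the literature (citing \cite{Beals-IP}), whereas you sketch its proof via the $H^1\hookrightarrow C^{1/2}$ embedding and a near/far splitting of the Cauchy integral; both are standard and lead to the same conclusion.
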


\begin{proof}
For part $(a)$, $(b)$ and $(c)$, it is obvious to obtain their results from the definition of $T(z)$ shown in \eqref{2.15}. For part $(d)$, we expand the $\prod_{k\in\Delta_{z_{0}}^{-}}\frac{z-z^{*}_{k}}{z-z_{k}}$ and $\delta(z)$ for large $z$. Then, a direct calculation can show the result. For part $(e)$, we rewrite \eqref{2.15} as
\begin{align*}
T(z)=T(z,z_{0})&=\prod_{k\in\Delta_{z_{0}}^{-}}\left(\frac{z-z^{*}_{k}}{z-z_{k}}\right) \exp\left(i\int_{z_{0}-1}^{z_{0}}\frac{\nu(z_{0})}{s-z}ds+ i\int_{-\infty}^{z_{0}}\frac{\nu(s)-\chi(s)\nu(z_{0})}{s-z}ds\right)\\
&=\prod_{k\in\Delta_{z_{0}}^{-}}\left(\frac{z-z^{*}_{k}}{z-z_{k}}\right)(z-z_{0})^{i\nu(z_{0})}
\exp(i\beta(z,z_{0}).
\end{align*}
Then, according to the fact that
\begin{align*}
|(z-z_{0})^{i\nu(z_{0})}|\leq|e^{-\pi\nu(z_{0})}|=\sqrt{1+|r(z_{0})|^{2}},
\end{align*}
and the Lemma shown in \cite{Beals-IP}, i.e.,
\begin{align*}
|\beta(z,z_{0})-\beta(z_{0},z_{0})|\leq c||r||_{H^{1}(\mathbb{R})}|z-z_{0}|^{\frac{1}{2}},
\end{align*}
the result of $(e)$ can be obtained directly.
\end{proof}

Now, using the function $T(z)$, we establish a transformation
\begin{align}\label{Trans-1}
M^{(1)}(z)=M(z)T(z)^{-\sigma_{3}},
\end{align}
which admits the following matrix RHP.

\begin{RHP}\label{RH-2}
Find an analysis function $M^{(1)}$ with the following properties:
\begin{itemize}
  \item $M^{(1)}$ is meromorphic on $C\setminus R$;
  \item $M^{(1)}(z)= I+O(z^{-1})$ as $z\rightarrow \infty$;
  \item $M^{(1)}_{\pm}(z)$ satisfy the jump relationship $M^{(1)}_{+}(z)=M^{(1)}_{-}(z)V^{(1)}(z)$, where
      \begin{align}\label{2.20}
       V^{(1)}=\left\{\begin{aligned}
      \left(
        \begin{array}{cc}
      1 & r^{*}(z)T(z)^{2}e^{-2it\theta} \\
      0 & 1 \\
        \end{array}
      \right)\left(
     \begin{array}{cc}
       1 & 0 \\
       r(z)T(z)^{-2}e^{2it\theta} & 1 \\
      \end{array}
    \right)\triangleq W_{L}W_{R},z\in(z_{0}, \infty),\\
   \left(
    \begin{array}{cc}
    1 & 0 \\
    \frac{r(z)T_{-}(z)^{-2}}{1+|r(z)|^{2}}e^{2it\theta} & 1 \\
     \end{array}
   \right)\left(
    \begin{array}{cc}
    1 & \frac{r^{*}(z)T_{+}(z)^{2}}{1+|r(z)|^{2}}e^{-2it\theta} \\
    0 & 1 \\
   \end{array}
  \right)\triangleq U_{L}U_{R},z\in(-\infty, z_{0}).
   \end{aligned}\right.
   \end{align}
   \item $M^{(1)}(z)$ has simple poles at each $z_{k}\in Z$ and $z_{k}\in Z$ at which
\begin{align}\label{2.21}
\begin{split}
\mathop{Res}\limits_{z_{k}}M^{(1)}=\left\{\begin{aligned}
&\lim_{z\rightarrow z_{k}}M^{(1)}\left(\begin{array}{cc}
    0 & c_{k}^{-1}\left((\frac{1}{T})'(z_{k})\right)^{-2}e^{-2it\theta}\\
    0 & 0 \\
  \end{array}
\right),k\in \Delta_{z_{0}}^{-}\\
&\lim_{z\rightarrow z_{k}}M^{(1)}\left(
  \begin{array}{cc}
    0 & 0 \\c_{k}^{-1}T^{-2}(z_{k})e^{2it\theta} & 0 \\
  \end{array}\right),k\in \Delta_{z_{0}}^{+}
\end{aligned}\right.\\
\mathop{Res}\limits_{z^{*}_{k}}M^{(1)}=\left\{\begin{aligned}
&\lim_{z\rightarrow z^{*}_{k}}M^{(1)}\left(\begin{array}{cc}
    0 & 0\\
    -\bar{c}_{k}^{-1}(T'(z^{*}_{k}))^{-2}e^{2it\theta} & 0 \\
  \end{array}
\right),k\in \Delta_{z_{0}}^{-}\\
&\lim_{z\rightarrow z^{*}_{k}}M^{(1)}\left(
  \begin{array}{cc}
    0 & -c^{*}_{k}(T(z^{*}_{k}))^{2}e^{-2it\theta} \\0 & 0 \\
  \end{array}\right),k\in \Delta_{z_{0}}^{+}
\end{aligned}\right.
\end{split}
\end{align}
\end{itemize}
\end{RHP}

\begin{proof}
Firstly, according to the definition of $M^{(1)}$, Proposition \ref{T-property} and the properties of $M(x,t;z)$, we can easily obtain the analyticity, jump matrix and asymptotic behavior of $M^{(1)}$. Then, we consider the residue conditions. When $k\in\triangle^{+}_{z_{0}}$, $T(z)$ is analytic at the points $z_{k}$, $z^{*}_{k}$. Thus, at these points, the residue conditions can be obtained directly from \eqref{2.12} and \eqref{Trans-1}. When $k\in\triangle^{-}_{z_{0}}$, $z_{k}$ is the pole of $T(z)$ and $T^{-1}(z)$ is analytic at $z_{k}$. Consequently, we have
\begin{align*}
\mathop{Res}\limits_{z=z_{k}}M^{(1)}_{1}&=\mathop{Res}\limits_{z=z_{k}}(M_{1}T^{-1})=0,\\
M^{(1)}_{1}(z_{k})&=\lim_{z\rightarrow z_{k}}(M_{1}(z)T^{-1}(z))=c_{k}e^{2it\theta(z_{k})}M_{2}(z_{k})(\frac{1}{T})'(z_{k}),\\
\mathop{Res}\limits_{z=z_{k}}M^{(1)}_{2}&=\mathop{Res}\limits_{z=z_{k}}(M_{2}T)
=M_{2}(z_{k})\lim_{z\rightarrow z_{k}}T(z)(z-z_{k})\\
&=c^{-1}_{k}e^{-2it\theta(z_{k})}M^{(1)}_{1}(z_{k})((\frac{1}{T})'(z_{k}))^{-1},
\end{align*}
from which the first formula shown in \eqref{2.21} can be obtained. Similarly, it is easy to derive the residue condition at $z^{*}_{k}$ with $k\in\triangle^{-}_{z_{0}}$.

\end{proof}

\section{Continuous extension to a mixed $\bar{\partial}$-RH problem}
In this section, we are going to make the continuous extension of the jump matrix off the real axis. It should be pointed that the extension is not necessary and the oscillation term along the new contours are decaying. To achieve this, we define the contours
\begin{align}\label{2.22}
\begin{split}
\Sigma_{j}=&-\frac{1}{4}(\frac{x}{t}+\alpha)+e^{i(2j-1)\pi/4}\mathbb{R}_{+},j=1,2,3,4,\\
\Sigma_{R}=&R\cup\Sigma_{1}\cup\Sigma_{2}\cup\Sigma_{3}\cup\Sigma_{4}.
\end{split}
\end{align}
Moreover, define
\begin{align}
\rho=\frac{1}{2}\min_{\lambda,\zeta\in \mathcal{Z}\cup \mathcal{Z}^{*} \lambda\neq\mu}|\lambda-\zeta|,
\end{align}
and $\chi_{Z} \in C_{0}^{\infty} (C, [0, 1])$ which is supported near the discrete spectrum such that
\begin{align}\label{2.23}
\chi_{Z}(z)=\left\{\begin{aligned}
&1,~~dist(z,\mathcal{Z}\cup \mathcal{Z}^{*})<\rho/3, \\
&0,~~dist(z,\mathcal{Z}\cup \mathcal{Z}^{*})>2\rho/3.
\end{aligned}\right.
\end{align}
It is easy to check that $dist(\mathcal{Z}\cup \mathcal{Z}^{*}, R)>\rho, k=1,2,\cdots,N.$

Next, we need to extend the jump matrix onto the new contours along which oscillation term are decaying. Therefore, we introduce a transformation
\begin{align}\label{Trans-2}
M^{(2)}=M^{(1)}R^{(2)},
\end{align}
where $R^{(2)}$ is selected to admit some conditions.
\begin{itemize}
  \item The purpose of the transformation is to deform the contour $\mathbb{R}$ to the contour $\Sigma^{(2)}$ where $\Sigma^{(2)}=\Sigma_{1}\cup\Sigma_{2}\cup\Sigma_{3}\cup\Sigma_{4}$. So on the real axis, $M^{(2)}$ must have no jump.
  \item To guarantee that the $\bar{\partial}$-contribution to the large-time asymptotic solution of $u(x,t)$ and $v(x,t)$, the norm of $R^{(2)}$ should be controlled.
  \item The introduced transformation need to have no impact on the residue condition.
\end{itemize}
Thus, we define $R^{(2)}$ as
\begin{align}
R^{(2)}=\left\{\begin{aligned}
&\left(
  \begin{array}{cc}
    1 & 0 \\
    R_{1}e^{2it\theta} & 1 \\
  \end{array}
\right)^{-1}\triangleq W_{R}^{-1}, ~&z\in\Omega_{1},\\
&\left(
  \begin{array}{cc}
    1 & R_{3}e^{-2it\theta} \\
    0 & 1 \\
  \end{array}
\right)^{-1}\triangleq U_{R}^{-1}, ~&z\in\Omega_{3},\\
&\left(
  \begin{array}{cc}
    1 & 0 \\
    R_{4}e^{2it\theta} & 1 \\
  \end{array}
\right)\triangleq U_{L}, ~&z\in\Omega_{4},\\
&\left(
  \begin{array}{cc}
    1 & R_{6}e^{-2it\theta} \\
    0 & 1 \\
  \end{array}
\right)\triangleq W_{L},~ &z\in\Omega_{6},\\
&\left(
  \begin{array}{cc}
    1 & 0 \\
    0 & 1 \\
  \end{array}
\right),~ &z\in\Omega_{2}\cup\Omega_{5},
\end{aligned}
\right.
\end{align}
where $R_{j}, j= 1, 3, 4, 6$ are defined in the following proposition.
\begin{prop}\label{R-property}
There exists functions $R_{j}: \Omega_{j} \rightarrow C, j= 1, 3, 4, 6$ such that
\begin{align*}
&R_{1}(z)=\left\{\begin{aligned}&r(z)T^{-2}(z), ~~~~z\in(z_{0}, \infty),\\
&f_{1}=r(z_{0})T_{0}^{-2}(z_{0})(z-z_{0})^{-2i\nu(z_{0})}(1-\chi_{Z}(z)), z\in\Sigma_{1},
\end{aligned}\right.\\
&R_{3}(z)=\left\{\begin{aligned}&\frac{r^{*}(z)}{1+|r(z)|^{2}}T_{+}^{2}(z), ~~~~z\in(-\infty, z_{0}),\\
&f_{3}=\frac{r^{*}(z_{0})}{1+|r(z_{0})|^{2}}T_{0}^{2}(z_{0}) (z-z_{0})^{2i\nu(z_{0})}(1-\chi_{Z}(z)), z\in\Sigma_{2},
\end{aligned}\right.\\
&R_{4}(z)=\left\{\begin{aligned}&\frac{r(z)}{1+|r(z)|^{2}}T_{-}^{2}(z), ~~~~z\in(-\infty, z_{0}),\\
&f_{4}=\frac{r(z_{0})}{1+|r(z_{0})|^{2}}T_{0}^{-2}(z_{0}) (z-z_{0})^{-2i\nu(z_{0})}(1-\chi_{Z}(z)), z\in\Sigma_{3},
\end{aligned}\right.\\
&R_{6}(z)=\left\{\begin{aligned}&r^{*}(z)T^{2}(z), ~~~~z\in(z_{0}, \infty),\\
&f_{6}=r^{*}(z_{0})T_{0}^{2}(z_{0})(z-z_{0})^{2i\nu(z_{0})}(1-\chi_{Z}(z)), z\in\Sigma_{4}.
\end{aligned}\right.
\end{align*}
And $R_{j}$ admit that
\begin{align}\label{R-estimate}
&|R_{j}(z)|\leq c_{1}\sin^{2}(\arg z)+c_{2}\left<Rez\right>^{-1/2},\\
&|\bar{\partial}R_{j}(z)|\leq c_{1}\bar{\partial}\chi_{Z}(z)+c_{2}|z-z_{0}|^{-1/2}+c_{3}|r'(Rez)|,\\
&\bar{\partial}R_{j}(z)=0,z\in \Omega_{2}\cup\Omega_{5},or~ dist(z,\mathcal{Z}\cup\mathcal{Z}^{*})\leq \rho/3.
\end{align}
\end{prop}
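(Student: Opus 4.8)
The plan is to define each $R_j$ explicitly by interpolating between its prescribed boundary value on the real axis and its prescribed value on the ray $\Sigma_j$, and then to verify the three estimates \eqref{R-estimate} directly from that interpolation formula. Concretely, on the sector $\Omega_1$ (say), write $z-z_0 = \rho e^{i\phi}$ with $\phi \in (0,\pi/4)$ and set
\begin{align*}
R_1(z) = \bigl[r(z_0)T_0^{-2}(z_0)(z-z_0)^{-2i\nu(z_0)} - r(\mathrm{Re}\,z)T^{-2}(\mathrm{Re}\,z)\bigr]\,\cos(2\phi)\,(1-\chi_Z(z)) + r(\mathrm{Re}\,z)T^{-2}(\mathrm{Re}\,z)(1-\chi_Z(z)),
\end{align*}
a standard device (going back to Dieng--McLaughlin and used in all the $\bar\partial$-papers cited) so that $R_1$ agrees with $r T^{-2}$ on $\mathbb R$ (where $\phi=0$) and with $f_1$ on $\Sigma_1$ (where $\phi=\pi/4$, $\cos(2\phi)=0$); the cutoff $(1-\chi_Z)$ kills the contribution near the discrete spectrum so that the residue conditions are untouched, which handles the last bullet in the list preceding the proposition. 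The analogous formulas on $\Omega_3,\Omega_4,\Omega_6$ are obtained by the obvious substitutions, using $T_\pm$ where the boundary value sits on $(-\infty,z_0)$.

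Next I would establish the pointwise bound on $|R_j|$. The term $r(\mathrm{Re}\,z)T^{-2}(\mathrm{Re}\,z)(1-\chi_Z)$ is bounded because $r\in H^1(\mathbb R)\subset L^\infty(\mathbb R)$ and $T$ is unimodular on $\mathbb R$ away from $(-\infty,z_0]$ / bounded with unimodular boundary values otherwise, and the combination $\cos(2\phi)=\sin^2(\arg(z-z_0))\cdot(\text{const}) + \cdots$ — more precisely, since $|\cos 2\phi| \lesssim 1$ and vanishes on the relevant ray, together with the standard fact that $|r(\mathrm{Re}\,z) - r(z_0)| \lesssim \|r'\|_{L^2}|\mathrm{Re}\,z - z_0|^{1/2}$ and the Hölder estimate \eqref{2.18} for $T$ from Proposition \ref{T-property}(e), one gets $|R_j(z)| \le c_1\sin^2(\arg z) + c_2\langle \mathrm{Re}\,z\rangle^{-1/2}$. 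The $\langle\mathrm{Re}\,z\rangle^{-1/2}$ gain far from $z_0$ comes from $r\in H^1$ forcing $|r(s)|\to 0$ as $|s|\to\infty$ at rate $\langle s\rangle^{-1/2}$ by the Sobolev embedding / fundamental theorem of calculus argument.

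Then I would compute $\bar\partial R_j$. Since the first two bracketed terms are (anti-)holomorphic in $z$ up to their dependence on $\mathrm{Re}\,z$ and $\arg z$, $\bar\partial$ hits only the $\chi_Z$ factor, the $\cos(2\phi)$ factor, and the $r(\mathrm{Re}\,z)$, $T(\mathrm{Re}\,z)$ factors. Writing $\bar\partial = \tfrac12(\partial_{\mathrm{Re}\,z} + i\partial_{\mathrm{Im}\,z})$ in polar coordinates centered at $z_0$, the $\partial_\phi\cos(2\phi)$ piece contributes a factor $|z-z_0|^{-1}$ times the bracketed difference, which is $O(|z-z_0|^{1/2})$ by the two Hölder estimates above, giving the $|z-z_0|^{-1/2}$ term; the derivative landing on $r(\mathrm{Re}\,z)$ gives the $|r'(\mathrm{Re}\,z)|$ term; and the derivative on $\chi_Z$ gives the $\bar\partial\chi_Z$ term, which by construction \eqref{2.23} is supported in the compact annulus $\rho/3 \le \mathrm{dist}(z,\mathcal Z\cup\mathcal Z^*)\le 2\rho/3$ where $\bar\partial\chi_Z$ is bounded. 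That $\bar\partial R_j = 0$ on $\Omega_2\cup\Omega_5$ and near the discrete spectrum is immediate from the definition (on $\Omega_2\cup\Omega_5$ we set $R^{(2)}=I$; near $\mathcal Z\cup\mathcal Z^*$ the $(1-\chi_Z)$ factor is identically zero so $R_j\equiv 0$ there).

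The main obstacle is the behavior near the stationary phase point $z_0$: there the ``interpolation'' must be done against the singular model factor $(z-z_0)^{\pm 2i\nu(z_0)}$ rather than a smooth function, and one has to check that subtracting $r(\mathrm{Re}\,z)T^{-2}(\mathrm{Re}\,z)$ from $r(z_0)T_0^{-2}(z_0)(z-z_0)^{-2i\nu(z_0)}$ really does produce a difference that is $O(|z-z_0|^{1/2})$ uniformly as $z\to z_0$ inside the sector — this is exactly where Proposition \ref{T-property}(e) and the $H^1$-Hölder bound on $r$ are used in tandem, and it is the estimate that controls the order $|z-z_0|^{-1/2}$ (hence local $L^2$, integrable against area measure in two dimensions) of $\bar\partial R^{(2)}$ that later feeds into the pure $\bar\partial$-problem in section 7. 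Everything else is bookkeeping over the four sectors and using the symmetry relations $T^*(z^*) = 1/T(z)$ and $\tfrac{s_{12}}{s_{22}} = -r^*$ to see that the four formulas are consistent with the jump factorizations $W_LW_R$, $U_LU_R$ of \eqref{2.20}.
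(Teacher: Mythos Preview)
The paper actually states this proposition without proof (it is quoted as a standard ingredient of the $\bar\partial$ method, cf.\ the references to Dieng--McLaughlin and Borghese--Jenkins--McLaughlin), so there is no ``paper's proof'' to compare against. Your proposal supplies precisely the standard construction those references use: a $\cos(2\phi)$ interpolation between the real-axis value and the ray value, combined with the cutoff $1-\chi_Z$, and then the three estimates are read off using Proposition~\ref{T-property}(e) and the $H^1$-H\"older bound $|r(p)-r(z_0)|\le \|r'\|_{L^2}|p-z_0|^{1/2}$. That is the correct strategy and all the right ingredients are identified.

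One genuine slip: your displayed formula for $R_1$ is written backwards. With $\phi=0$ (i.e.\ $z\in\mathbb R$, $\cos 2\phi=1$, $\chi_Z=0$) your expression collapses to $r(z_0)T_0^{-2}(z_0)(z-z_0)^{-2i\nu(z_0)}$, which is the $\Sigma_1$ value $f_1$, not $r(z)T^{-2}(z)$; and at $\phi=\pi/4$ it gives $r(\mathrm{Re}\,z)T^{-2}(\mathrm{Re}\,z)(1-\chi_Z)$, the real-axis value. Swap the two terms inside the bracket (or equivalently replace $\cos(2\phi)$ by $1-\cos(2\phi)$ in the bracketed piece) and the formula matches your verbal description. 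This is cosmetic --- the subsequent estimates go through unchanged --- but as written the boundary conditions are interchanged and $M^{(2)}$ would not have the correct jump on $\Sigma^{(2)}$.
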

where $\left<Rez\right>=\sqrt{1+(Rez)^{2}}$.\\

\centerline{\begin{tikzpicture}[scale=0.7]
\path [fill=yellow] (-5,4)--(-4,4) to (-4,4) -- (0,0);
\path [fill=yellow] (-5,4)--(-4,4) to (-4,4) -- (-4,3);
\path [fill=yellow] (-4,3)--(-5,3) to (-5,3) -- (-5,4);
\path [fill=yellow] (0,0)--(-4,-4) to (-4,-4) -- (-5,-4);
\path [fill=yellow] (-5,-4)--(-5,0) to (-5,0) -- (-5,-4);
\path [fill=yellow] (-5,-3)--(-5,3) to (-5,3) -- (-4,3);
\path [fill=yellow] (-4,3)--(-4,-3) to (-4,-3) -- (-5,-3);
\path [fill=yellow] (-5,-4)--(-4,-4) to (-4,-4) -- (-4,-3);
\path [fill=yellow] (-4,-3)--(-5,-3) to (-5,-3) -- (-5,-4);
\path [fill=yellow] (-5,4)--(-4,4) to (-4,4) -- (0,0);
\path [fill=yellow] (-4,-4)--(0,0) to (0,0) -- (-4,0);
\path [fill=yellow] (-4,0)--(0,0) to (0,0) -- (-4,4);
\path [fill=yellow] (5,4)--(4,4) to (4,4) -- (0,0);
\path [fill=yellow] (0,0)--(4,-4) to (4,-4) -- (5,-4);
\path [fill=yellow] (5,-4)--(5,0) to (5,0) -- (5,4);
\path [fill=yellow] (5,4)--(4,4) to (4,4) -- (4,3);
\path [fill=yellow] (4,3)--(5,3) to (5,3) -- (5,4);
\path [fill=yellow] (4,3)--(5,3) to (5,3) -- (5,-3);
\path [fill=yellow] (5,-3)--(4,-3) to (4,-3) -- (4,3);
\path [fill=yellow] (4,-3)--(5,-3) to (5,-3) -- (5,-4);
\path [fill=yellow] (5,-4)--(4,-4) to (4,-4) -- (4,-3);
\path [fill=yellow] (0,0)--(4,4) to (4,4) -- (4,0);
\path [fill=yellow] (4,0)--(4,-4) to (4,-4) -- (0,0);
\draw[->][thick](-3,0)--(-2,0);
\draw[->][thick](2,0)--(3,0);
\draw[->][thick](5,0)--(6,0)[thick]node[right]{$Rez$};
\draw[->][dashed](-2.5,2.5)--(-2,2);
\draw[->][dashed](1,-1)--(2,-2);
\draw[->][dashed](1,1)--(2,2);
\draw[->][dashed](-2.5,-2.5)--(-2,-2);
\draw [dashed](-4,-4)--(4,4);
\draw [dashed](-4,4)--(4,-4);
\draw[fill] (0,0)node[below]{$z_{0}$};
\draw[fill] (0,-1)node[below]{$\Omega_{5}$};
\draw[fill] (0,1.5)node[below]{$\Omega_{2}$};
\draw[fill] (1.5,-0.5)node[below]{$\Omega_{6}$};
\draw[fill] (1.5,1)node[below]{$\Omega_{1}$};
\draw[fill] (-1.5,1)node[below]{$\Omega_{3}$};
\draw[fill] (-1.5,-0.5)node[below]{$\Omega_{4}$};
\draw[fill] (-6,2)node[below]{$\mathcal {R}^{(2)}=U_{R}^{-1}$};
\draw[fill] (4,2)node[below]{$\mathcal {R}^{(2)}=W_{R}^{-1}$};
\draw[fill] (4,-2)node[below]{$\mathcal {R}^{(2)}=W_{L}$};
\draw[fill] (-6,-2)node[below]{$\mathcal {R}^{(2)}=U_{L}$};
\draw[fill] (4,2.5)node[below]{} circle [radius=0.08];
\draw[fill] (4,-2.5)node[below]{} circle [radius=0.08];
\draw[fill] (2,3)node[below]{} circle [radius=0.08];
\draw[fill] (2,-3)node[below]{} circle [radius=0.08];
\draw[fill] (-2.5,2)node[below]{} circle [radius=0.08];
\draw[fill] (-2.5,-2)node[below]{} circle [radius=0.08];
\draw[fill] (-4,3)node[below]{} circle [radius=0.08];
\draw[fill] (-4,-3)node[below]{} circle [radius=0.08];
\draw[fill] (-2.5,2)node[below]{${z}_{k}$};
\draw[fill] (-2.5,-2)node[below]{${\bar{z}}_{k}$};
\draw[fill] (4.5,4)node[below]{$\Sigma_{1}$};
\draw[fill] (4.5,-4)node[below]{$\Sigma_{4}$};
\draw[fill] (-4.5,4)node[below]{$\Sigma_{2}$};
\draw[fill] (-4.5,-4)node[below]{$\Sigma_{3}$};
\draw[-][thick](-6,0)--(-5,0);
\draw[-][thick](-5,0)--(-4,0);
\draw[-][thick](-4,0)--(-3,0);
\draw[-][thick](-3,0)--(-2,0);
\draw[-][thick](-2,0)--(-1,0);
\draw[-][thick](-1,0)--(0,0);
\draw[-][thick](0,0)--(1,0);
\draw[-][thick](1,0)--(2,0);
\draw[-][thick](2,0)--(3,0);
\draw[-][thick](3,0)--(4,0);
\draw[-][thick](4,0)--(5,0);
\draw[-][thick](5,0)--(6,0);
\end{tikzpicture}}
\centerline{\noindent {\small \textbf{Figure 2.}  Definition of $R^{(2)}$ in different domains.}}

Then, using the transformation\eqref{Trans-2} and the definition of $R^{(2)}$, we derive that $M^{(2)}$ admits the following mixed $\bar{\partial}$-RH problem.
\begin{RHP}\label{RH-3}
Find a matrix value function $M^{(2)}$, admitting
\begin{itemize}
 \item $M^{(2)}(x,t,z)$ is continuous in $\mathbb{C}\backslash(\Sigma^{(2)}\cup\mathcal{Z}\cup\mathcal{Z}^{*})$.
 \item $M_+^{(2)}(x,t,z)=M_-^{(2)}(x,t,z)V^{(2)}(x,t,z),$ \quad $z\in\Sigma^{(2)}$, where the jump matrix $V^{(2)}(x,t,z)$ satisfies
 \begin{align}\label{J-V2}
 V^{(2)}(z)=(R_-^{(2)})^{-1}V^{(1)}R_+^{(2)}=I+(1-\chi_{\mathcal{Z}}(z)) \bar{V}^{(2)},
 \end{align}
 with
 \begin{align}
\bar{V}^{(2)}(z)=\left\{\begin{aligned}
&\begin{pmatrix}0&0\\r(z_0)T_0(z_0)^{-2}(z-z_0)^{-2i\nu(z_0)}e^{2it\theta}&0\end{pmatrix} &z\in\Sigma_1, \\
&\begin{pmatrix}0&\frac{r^{*}(z_0)T_0(z_0)^2}{1+|{r(z_0})|^2}(z-z_0)^{2i\nu(z_0)}e^{-2it\theta}\\0&0\end{pmatrix} &z\in\Sigma_2, \\
&\begin{pmatrix}0&0\\\frac{r(z_0)T_0(z_0)^{-2}}{1+|{r(z_0})|^2}(z-z_0)^{-2i\nu(z_0)}e^{2it\theta}&0\end{pmatrix} &z\in\Sigma_3, \\
&\begin{pmatrix}0&r^{*}(z_0)T_0(z_0)^{2}(z-z_0)^{2i\nu(z_0)}e^{-2it\theta}\\0&0\end{pmatrix} &z\in\Sigma_4.
\end{aligned}\right.
\end{align}
\item $M^{(2)}(x,t,z)\rightarrow I,$ \quad $z\rightarrow\infty$.
\item For $\mathbb{C}\backslash(\Sigma^{(2)}\cup\mathcal{Z}\cup\mathcal{Z}^{*})$, $\bar{\partial}M^{(2)}=M^{(2)}\bar{\partial}\mathcal{R}^{(2)}(z),$ where
    \begin{align}\label{4.1}
\bar{\partial}\mathcal{R}^{(2)}=\left\{\begin{aligned}
&\begin{pmatrix}0&0\\-\bar{\partial}R_1 e^{2it\theta}&0\end{pmatrix}, &z\in\Omega_1, \\
&\begin{pmatrix}0&-\bar{\partial}R_3 e^{-2it\theta}\\0&0\end{pmatrix}, &z\in\Omega_3, \\
&\begin{pmatrix}0&0\\ \bar{\partial}R_4 e^{2it\theta}&0\end{pmatrix}, &z\in\Omega_4, \\
&\begin{pmatrix}0&-\bar{\partial}R_6 e^{-2it\theta}\\0&0\end{pmatrix}, &z\in\Omega_6,\\
&\begin{pmatrix}0&0\\0&0\end{pmatrix}, &z\in\Omega_2\cup\Omega_5.
\end{aligned}\right.
\end{align}
  \item  $M^{(2)}$ admits the residue conditions at poles $z_{k} \in \mathcal{Z}$ and $z^{*}_{k} \in \mathcal{Z}^{*}$, i.e.,
      \begin{align}
\mathop{Res}_{z=z_k}M^{(2)}=\left\{\begin{aligned}
&\mathop{lim}_{z\rightarrow z_k}M^{(2)}\begin{pmatrix}0&c_k^{-1}(\frac{1}{T})^{'}(z_k)^{-2}e^{-2it\theta(z_k)}\\0&0\end{pmatrix}, &&k\in \triangle_{z_0}^{-}\\
&\mathop{lim}_{z\rightarrow z_k}M^{(2)}\begin{pmatrix}0&0\\c_k^{-1}T(z_k)^{-2}e^{2it\theta(z_k)}&0\end{pmatrix}, &&k\in \triangle_{z_0}^+
\end{aligned}\right.\notag\\
\mathop{Res}_{z=z^{*}_{k}}M^{(2)}=\left\{\begin{aligned}
&\mathop{lim}_{z\rightarrow z^{*}_{k}}M^{(2)}\begin{pmatrix}0&0\\-(c^{*}_k)^{-1}T^{'}(\bar{z}_k)^{-2}e^{-2it\theta(\bar{z}_k)}&0\end{pmatrix}, &&k\in \triangle_{z_0}^{-}\\
&\mathop{lim}_{z\rightarrow z^{*}_{k}}M^{(2)}\begin{pmatrix}0&-c^{*}_kT(\bar{z}_k)^{2}e^{-2it\theta(\bar{z}_k)}\\0&0\end{pmatrix}, &&k\in \triangle_{z_0}^+
\end{aligned}\right.\notag
\end{align}
\end{itemize}
\end{RHP}
Then, from Eq.\eqref{2.13} and the transformation \eqref{Trans-1} and \eqref{Trans-2}, we can reconstruct the solution of cgNLS equation as
\begin{align}\label{solution2}
\begin{split}
u(x,t)=2i \lim_{z\rightarrow \infty}(zM^{(2)})_{12}e^{2i\int_{(-\infty,\infty)}^{(x,t)}\Delta},~~
v(x,t)=2i \lim_{z\rightarrow \infty}(zM^{(2)})_{21}e^{-2i\int_{(-\infty,\infty)}^{(x,t)}\Delta}.
\end{split}
\end{align}

\centerline{\begin{tikzpicture}[scale=0.7]
\path [fill=yellow] (-5,4)--(-4,4) to (-4,4) -- (0,0);
\path [fill=yellow] (-5,4)--(-4,4) to (-4,4) -- (-4,3);
\path [fill=yellow] (-4,3)--(-5,3) to (-5,3) -- (-5,4);
\path [fill=yellow] (0,0)--(-4,-4) to (-4,-4) -- (-5,-4);
\path [fill=yellow] (-5,-4)--(-5,0) to (-5,0) -- (-5,-4);
\path [fill=yellow] (-5,-3)--(-5,3) to (-5,3) -- (-4,3);
\path [fill=yellow] (-4,3)--(-4,-3) to (-4,-3) -- (-5,-3);
\path [fill=yellow] (-5,-4)--(-4,-4) to (-4,-4) -- (-4,-3);
\path [fill=yellow] (-4,-3)--(-5,-3) to (-5,-3) -- (-5,-4);
\path [fill=yellow] (-5,4)--(-4,4) to (-4,4) -- (0,0);
\path [fill=yellow] (-4,-4)--(0,0) to (0,0) -- (-4,0);
\path [fill=yellow] (-4,0)--(0,0) to (0,0) -- (-4,4);
\path [fill=yellow] (5,4)--(4,4) to (4,4) -- (0,0);
\path [fill=yellow] (0,0)--(4,-4) to (4,-4) -- (5,-4);
\path [fill=yellow] (5,-4)--(5,0) to (5,0) -- (5,4);
\path [fill=yellow] (5,4)--(4,4) to (4,4) -- (4,3);
\path [fill=yellow] (4,3)--(5,3) to (5,3) -- (5,4);
\path [fill=yellow] (4,3)--(5,3) to (5,3) -- (5,-3);
\path [fill=yellow] (5,-3)--(4,-3) to (4,-3) -- (4,3);
\path [fill=yellow] (4,-3)--(5,-3) to (5,-3) -- (5,-4);
\path [fill=yellow] (5,-4)--(4,-4) to (4,-4) -- (4,-3);
\path [fill=yellow] (0,0)--(4,4) to (4,4) -- (4,0);
\path [fill=yellow] (4,0)--(4,-4) to (4,-4) -- (0,0);
\draw(4,2.5) [black, line width=0.5] circle(0.3);
\draw(4,-2.5) [black, line width=0.5] circle(0.3);
\draw(2,3) [black, line width=0.5] circle(0.3);
\draw(2,-3) [black, line width=0.5] circle(0.3);
\draw(-2.5,2) [black, line width=0.5] circle(0.3);
\draw(-2.5,-2) [black, line width=0.5] circle(0.3);
\draw(-4,3) [black, line width=0.5] circle(0.3);
\draw(-4,3) [black, line width=0.5] circle(0.3);
\draw[fill][white] (4,2.5) circle [radius=0.3];
\draw[fill][white] (4,-2.5) circle [radius=0.3];
\draw[fill][white] (-2.5,2) circle [radius=0.3];
\draw[fill][white] (-2.5,-2) circle [radius=0.3];
\draw[fill][white] (-4,3) circle [radius=0.3];
\draw[fill][white] (-4,-3) circle [radius=0.3];
\draw[->][thick](-3,0)--(-2,0);
\draw[->][thick](2,0)--(3,0);
\draw[->][thick](5,0)--(6,0)[thick]node[right]{$Rez$};
\draw[->][dashed](-2.5,2.5)--(-2,2);
\draw[->][dashed](1,-1)--(2,-2);
\draw[->][dashed](1,1)--(2,2);
\draw[->][dashed](-2.5,-2.5)--(-2,-2);
\draw [dashed](-4,-4)--(4,4);
\draw [dashed](-4,4)--(4,-4);
\draw[fill] (0,0)node[below]{$z_{0}$};
\draw[fill] (0,-1)node[below]{$\Omega_{5}$};
\draw[fill] (0,1.5)node[below]{$\Omega_{2}$};
\draw[fill] (1.5,-0.5)node[below]{$\Omega_{6}$};
\draw[fill] (1.5,1)node[below]{$\Omega_{1}$};
\draw[fill] (-1.5,1)node[below]{$\Omega_{3}$};
\draw[fill] (-1.5,-0.5)node[below]{$\Omega_{4}$};
\draw[fill] (-6,2)node[below]{$\mathcal {R}^{(2)}=U_{R}^{-1}$};
\draw[fill] (6,2)node[below]{$\mathcal {R}^{(2)}=W_{R}^{-1}$};
\draw[fill] (6,-2)node[below]{$\mathcal {R}^{(2)}=W_{L}$};
\draw[fill] (-6,-2)node[below]{$\mathcal {R}^{(2)}=U_{L}$};
\draw[fill] (4,2.5)node[below]{} circle [radius=0.08];
\draw[fill] (4,-2.5)node[below]{} circle [radius=0.08];
\draw[fill] (2,3)node[below]{} circle [radius=0.08];
\draw[fill] (2,-3)node[below]{} circle [radius=0.08];
\draw[fill] (-2.5,2)node[below]{} circle [radius=0.08];
\draw[fill] (-2.5,-2)node[below]{} circle [radius=0.08];
\draw[fill] (-4,3)node[below]{} circle [radius=0.08];
\draw[fill] (-4,-3)node[below]{} circle [radius=0.08];
\draw[fill] (-2.5,2)node[below]{${z}_{k}$};
\draw[fill] (-2.5,-2)node[below]{${\bar{z}}_{k}$};
\draw[fill] (4.5,4)node[below]{$\Sigma_{1}$};
\draw[fill] (4.5,-4)node[below]{$\Sigma_{4}$};
\draw[fill] (-4.5,4)node[below]{$\Sigma_{2}$};
\draw[fill] (-4.5,-4)node[below]{$\Sigma_{3}$};
\draw[-][thick](-6,0)--(-5,0);
\draw[-][thick](-5,0)--(-4,0);
\draw[-][thick](-4,0)--(-3,0);
\draw[-][thick](-3,0)--(-2,0);
\draw[-][thick](-2,0)--(-1,0);
\draw[-][thick](-1,0)--(0,0);
\draw[-][thick](0,0)--(1,0);
\draw[-][thick](1,0)--(2,0);
\draw[-][thick](2,0)--(3,0);
\draw[-][thick](3,0)--(4,0);
\draw[-][thick](4,0)--(5,0);
\draw[-][thick](5,0)--(6,0);
\end{tikzpicture}}
\noindent {\small \textbf{Figure 3.}   Jump matrix $V^{(2)}$, yellow parts support $\bar{\partial}$ derivative: $\bar{\partial}R^{(2)}\neq0$. White parts do not support $\bar{\partial}$ derivative: $\bar{\partial}R^{(2)}=0$.}

\section{Decomposition of the mixed $\bar{\partial}$-RH problem}
In this section, we are going to decompose the mixed $\bar{\partial}$-RH problem, i.e., RHP \ref{RH-3} into two parts which include a model RH problem with $\bar{\partial}R^{(2)}=0$ and a pure $\bar{\partial}$-RH problem. We denote $M_{RHP}$ as the solution when $\bar{\partial}R^{(2)}=0$ in the mixed $\bar{\partial}$-RH problem \ref{RH-3}. Then, if the existence of the solution of $M_{RHP}$ can be proved and its asymptotic expansion for large $t$ can be constructed, the  mixed $\bar{\partial}$-RH problem \ref{RH-3} can be reduced to a pure $\bar{\partial}$-RH problem. Now, we first show the pure $\bar{\partial}$-RH problem. The proof of the existence and asymptotic of $M_{RHP}$ will be shown in the following analysis.

\begin{RHP}\label{RH-rhp}
Find a matrix value function $M_{RHP}$, admitting
\begin{itemize}
 \item $M_{RHP}$ is analytical in $\mathbb{C}\backslash(\Sigma^{(2)}\cup\mathcal{Z}\cup\mathcal{Z}^{*})$;
 \item $M_{RHP,+}(x,t,z)=M_{RHP,-}(x,t,z)V^{(2)}(x,t,z),$ \quad $z\in\Sigma^{(2)}$, where $V^{(2)}(x,t,z)$ is the same with the jump matrix appears in RHP \ref{RH-3};
 \item As $z\rightarrow\infty$, $M_{RHP}(x,t,z)=I+o(z^{-1})$;
 \item $M_{RHP}$ possesses the same residue condition with $M^{(2)}$.
 \end{itemize}
\end{RHP}

Now, using the $M_{RHP}$, we construct a transformation
\begin{align}\label{delate-pure-RHP}
M^{(3)}(z)=M^{(2)}(z)M_{RHP}(z)^{-1}.
\end{align}
Then, we can get the following pure $\bar{\partial}$-RH problem.
\begin{RHP}\label{RH-4}
Find a matrix value function $M^{(3)}$, admitting
\begin{itemize}
 \item $M^{(3)}$ is continuous with sectionally continuous first partial derivatives in $\mathbb{C}\backslash(\Sigma^{(2)}\cup\mathcal{Z}\cup\mathcal{Z}^{*})$;
 \item For $z\in \mathbb{C}$, we obtain $\bar{\partial}M^{(3)}(z)=M^{(3)}(z)W^{(3)}(z)$,
       where
       \begin{align}\label{5.1}
       W^{(3)}=M_{RHP}^{(2)}(z)\bar{\partial}R^{(2)}M_{RHP}^{(2)}(z)^{-1};
       \end{align}
 \item As $z\rightarrow\infty$,
       \begin{align}
       M^{(3)}(z)=I+o(z^{-1}).
       \end{align}
 \end{itemize}
\end{RHP}
\begin{proof}
Based on the properties of the $M_{RHP}$ and $M^{(2)}$ that has been shown in RHP \ref{RH-rhp} and \eqref{RH-3}, we can derive the analyticity and asymptotic properties of $M^{(3)}$ easily. According to the construction of the $M_{RHP}$, we know that $M_{RHP}$ possesses the same jump matrix with $M^{(2)}$. Consequently, we obtain that
\begin{align*}
M^{(3)}_{-}(z)^{-1}M^{(3)}_{+}(z)&=M_{RHP,-}(z)M^{(2)}_{-}(z)^{-1}M^{(2)}_{+}(z)M_{RHP,+}(z)^{-1}\\
&=M_{RHP,-}(z)V^{2}(z)(M_{RHP,-}(z)V^{2}(z))^{-1}=\emph{I},
\end{align*}
from which we know that $M^{(3)}$ has no jump. Next, we futher explain that there exists no pole in $M^{(3)}$. We use $N_{k}$ to denote nilpotent matrix which appears in the left side of the residue condition of RHP \ref{RH-3} and RHP \ref{RH-rhp}. Then, we obtain the Laurent expansions
\begin{align*}
M^{(2)}(z)=C(z_{k})\left[\frac{N_{k}}{z-z_{k}}+\emph{I}\right]+o(z-z_{k}),\\
M_{RHP}(z)=\hat{C}(z_{k})\left[\frac{N_{k}}{z-z_{k}}+\emph{I}\right]+o(z-z_{k}),
\end{align*}
where $C(z_{k})$ and $\hat{C}(z_{k})$ are constant terms. Then, we can derive that
\begin{align*}
M^{(2)}(z)M_{RHP}(z)^{-1}=o(1),
\end{align*}
which means has removable singularities at $z_{k}$. Finally, based on the definition of $M^{(3)}$, we have
\begin{align*}
\bar{\partial}M^{(3)}(z)&=\bar{\partial}(M^{(2)}(z)M_{RHP}(z)^{-1})
=\bar{\partial}M^{(2)}(z)M_{RHP}(z)^{-1}
=M^{(2)}(z)\bar{\partial}R^{(2)}(z)M_{RHP}(z)^{-1}\\
&=M^{(2)}(z)M_{RHP}(z)^{-1}(M_{RHP}(z)\bar{\partial}R^{(2)}(z)M_{RHP}(z)^{-1})=M^{(3)}(z)W^{(3)}(z),
\end{align*}
i.e., the second condition of RHP \ref{RH-4}.
\end{proof}

\section{The pure RH problem}
In this section, we construct the solution $M_{RHP}$ of RHP \ref{RH-rhp}. Define
\begin{align*}
\mathcal{U}_{z_0}=\{z:|z-z_0|<\rho/2\},
\end{align*}
and we decompose $M_{RHP}$ into two parts
\begin{align}\label{Mrhp}
M_{RHP}(z)=\left\{\begin{aligned}
&E(z)M^{(out)}(z), &&z\in\mathbb{C}\backslash \mathcal{U}_{z_0},\\
&E(z)M^{(out)}(z)M^{(pc)}(z_0,r_0), &&z\in \mathcal{U}_{z_0},
\end{aligned} \right.
\end{align}
where $M^{(out)}$ solve a model RHP, $M^{(pc)}$ is a known parabolic cylinder model and $E(z)$ is an error function which is a solution of a small-norm Riemann-Hilbert problem.
\subsection{Outer model RH problem: $M^{(out)}$}
Considering the construction of $M_{RHP}$, we know that $M_{RHP}$ admits a pure Riemann-Hilbert problem. Additionally, for the jump matrix $V^{(2)}$, we have the following estimate.
\begin{align}\label{V2-I}
||V^{(2)}-\emph{I}||_{L^{\infty}(\Sigma^{(2)})}=\left\{\begin{aligned}
&o(e^{-4t|z-z_0|^2}), &&z\in\Sigma^{(2)}\backslash \mathcal{U}_{z_0},\\
&o|z-z_0|^{-1}t^{-\frac{1}{2}}, &&z\in\Sigma^{(2)}\cap\mathcal{U}_{z_0}.
\end{aligned}\right.
\end{align}
Recall that $z_{0}=-\frac{1}{4}\left(\frac{x}{t}+\alpha\right)$.
Due to $|z-z_{0}|\geqslant\rho/2$ outside $\mathcal{U}_{z_0}$, if we omit the jump condition of $M_{RHP}$, the estimate infers to that exponentially small error exists. Next, we establish a model RH problem and prove that its solution can be approximated by finite sum of soliton solutions.
\begin{RHP}\label{RH-5}
Find a matrix value function $M^{(out)}$, admitting
\begin{itemize}
  \item $M^{(out)}(x,t;z)$ is analytical in $\mathbb{C}\backslash(\Sigma^{(2)}\cup\mathcal{Z}\cup\mathcal{Z}^{*})$;
  \item As $z\rightarrow\infty$,
       \begin{align}
       M^{(out)}(x,t;z)=I+o(z^{-1});
       \end{align}
  \item $M^{(out)}(x,t;z)$ has simple poles at each point in $\mathcal{Z}\cup\mathcal{Z}^{*}$ admitting the same residue condition in RHP \ref{RH-3} with $M^{(out)}(x,t;z)$ replacing $M^{(2)}(x,t;z)$.
\end{itemize}
\end{RHP}

To obtain the solution of $M^{(out)}(x,t;z)$, we first study RHP \ref{RH-1} for the case of reflectionless. Under this condition, we know that $M$ has no jump and get the following Riemann-Hilbert problem from RHP \ref{RH-1}.
\begin{RHP}\label{RH-6}
Find a matrix value function $M(x,t;z|\sigma_{d})$, admitting
\begin{itemize}
  \item $M(x,t;z|\sigma_{d})$ is analytical in $\mathbb{C}\setminus(\mathcal{Z}\bigcup\mathcal{Z}^{*})$;
  \item $M(x,t;z|\sigma_{d})=I+O(z^{-1})$, \quad $z\rightarrow\infty$;
  \item $M(x,t;z|\sigma_{d})$ satisfies the following residue conditions at simple poles $z_{k}\in\mathcal{Z}$ and $z_{k}^{*}\in\mathcal{Z}^{*}$
\begin{align}
\begin{aligned}
&\mathop{Res}_{z=z_{k}}M(x,t;z|\sigma_{d})=\mathop{lim}_{z\rightarrow z_{k}}M(x,t;z|\sigma_{d})N_{k},\\
&\mathop{Res}_{z=z_{k}^{*}}M(x,t;z|\sigma_{d})=\mathop{lim}_{z\rightarrow z_{k}^{*}}M(x,t;z|\sigma_{d})\sigma_{2}N^{*}_{k}\sigma_{2},
\end{aligned}
\end{align}
where $\sigma_{d}=\{(z_{k}, c_{k}), z_{k}\in\mathcal{Z}\}^{N}_{k=1}$, which satisfies $z_{k}\neq z_{j}$ for $k\neq j$, are scattering data , and
\begin{gather}
N_{k}=\left(\begin{aligned}
\begin{array}{cc}
  0 & 0 \\
  \gamma_{k}(x,t) & 0
\end{array}
\end{aligned}\right),~
\gamma_{k}(x,t)=c_{k}e^{2it\theta(z_{k})},\\
\theta(z_{k})=2z^{2}+\alpha z-\gamma+\frac{x}{t}z.
\end{gather}
\end{itemize}
\end{RHP}
Then, according to the Liouville's theorem, the uniqueness of the solution is obvious.  Based on the symmetry that shown in Eq.\eqref{2.10}, we obtain $M(x,t;z|\sigma_{d})=-\sigma M^{*}(x,t;z^{*}|\sigma_{d})\sigma$, from which we can derive the following expansion, i.e.,
\begin{align}
M(x,t;z|\sigma_{d})=\emph{I}+\sum_{k=1}^{N}\left[\frac{1}{z-z_{k}}\left(\begin{aligned}
\begin{array}{cc}
  \zeta_{k}(x,t) & 0 \\
  \eta_{k}(x,t) & 0
\end{array}
\end{aligned}\right)+\frac{1}{z-z^{*}_{k}}\left(\begin{aligned}
\begin{array}{cc}
  0 & -\eta^{*}_{k}(x,t) \\
  0 & \zeta^{*}_{k}(x,t)
\end{array}
\end{aligned}\right)\right],
\end{align}
where $\zeta_{k}(x,t)$ and $\eta_{k}(x,t)$ are unknown coefficients to be determined. Next, using the similar way that has been shown in literature\cite{AIHP}, we obtain the following proposition.
\begin{cor}
For the given scattering data $\sigma_{d}$, the RHP \ref{RH-6} exists the unique solution.
\end{cor}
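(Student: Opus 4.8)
The plan is to convert RHP~\ref{RH-6} into a finite linear-algebra problem and to prove that the resulting coefficient matrix is invertible. Since this Riemann--Hilbert problem carries no jump contour, any solution is meromorphic on all of $\mathbb{C}$ with at most simple poles confined to $\mathcal{Z}\cup\mathcal{Z}^{*}$ and normalized to $I$ at infinity; subtracting its principal parts and applying Liouville's theorem shows that every solution is a rational function, the location and column-structure of whose principal parts are dictated by the residue conditions. First I would record uniqueness cheaply: in each residue condition the residue of one column at a pole is a multiple of the \emph{other} column evaluated there, which forces the principal parts of $\det M$ to cancel at every pole, so $\det M$ is entire; being $1+O(z^{-1})$ at infinity it equals $1$ identically, hence $M$ is invertible on $\mathbb{C}$ and the quotient of two solutions is entire, tends to $I$, and thus equals $I$. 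Consequently every solution is invariant under the conjugation symmetry $M(z)\mapsto-\sigma\,M^{*}(\bar z)\,\sigma$ of the problem, so it takes exactly the form $M=I+\sum_{k}[\cdots]$ displayed just before the statement, leaving the $2N$ coefficients $\{\zeta_{k},\eta_{k}\}_{k=1}^{N}$ as the only unknowns; substituting this ansatz into the residue conditions at the points $z_{k}$ produces a finite linear system $\mathcal{A}\mathbf{x}=\mathbf{b}$ whose coefficients are built from the Cauchy kernels $(z_{k}-z_{j}^{*})^{-1}$ and the data $\gamma_{k}=c_{k}e^{2it\theta(z_{k})}$, and existence of a solution of RHP~\ref{RH-6} is equivalent to invertibility of $\mathcal{A}$.

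To show that $\mathcal{A}$ is invertible I would establish the associated vanishing lemma: the homogeneous problem (symmetric rational form, the same residue conditions, but $M_{0}\to0$ at infinity) has only the trivial solution $M_{0}\equiv0$. As above $\det M_{0}$ is entire, but now the stronger decay $M_{0}=O(z^{-1})$ forces $\det M_{0}=O(z^{-2})$, hence $\det M_{0}\equiv0$ by Liouville. Next, combining the conjugation symmetry $M_{0}(z)=-\sigma\,M_{0}^{*}(\bar z)\,\sigma$ with the elementary identity $A\sigma A^{T}=(\det A)\,\sigma$ for $2\times2$ matrices, one checks that $M_{0}(z)\,M_{0}(\bar z)^{\dagger}$ is a scalar multiple of $(\det M_{0}(z))\,I$, hence vanishes identically. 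Restricting to $z\in\mathbb{R}$, where $M_{0}$ is holomorphic because $\mathrm{Im}\,z_{k}>0$, gives $M_{0}(z)M_{0}(z)^{\dagger}=0$, so every row of $M_{0}(z)$ vanishes on $\mathbb{R}$; since the entries of $M_{0}$ are rational, $M_{0}\equiv0$. Therefore $\mathcal{A}$ is invertible, $\mathcal{A}\mathbf{x}=\mathbf{b}$ has a unique solution, and reading off $\{\zeta_{k},\eta_{k}\}$ recovers the unique solution of RHP~\ref{RH-6}; the explicit $N$-soliton reconstruction then proceeds exactly as in \cite{AIHP}.

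The step I expect to be the main obstacle is the vanishing lemma. Because the cgNLS reduction is non-self-adjoint (focusing-type), the Hermitian form naturally attached to the homogeneous linear system need not be positive, so one cannot conclude vanishing from a bare positivity or Zhou-type estimate as in the defocusing case; the substitute is the chain $\det M_{0}\equiv0\Rightarrow M_{0}(z)M_{0}(\bar z)^{\dagger}\equiv0\Rightarrow M_{0}\equiv0$ on $\mathbb{R}$, and this is precisely where the exact normalization of the residue matrices in RHP~\ref{RH-6} is used, so that the conjugation symmetry and the identity $A\sigma A^{T}=(\det A)\sigma$ fit together. A minor technical point underpinning the reduction is that distinct $z_{k}$ and distinct $z_{j}^{*}$ with $\mathrm{Im}\,z_{k}>0$ keep the Cauchy matrix $[(z_{k}-z_{j}^{*})^{-1}]$ nonsingular, which guarantees that the rational ansatz is consistent.
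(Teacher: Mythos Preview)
Your proposal is correct and follows essentially the same route the paper intends: the paper gives no self-contained proof of this corollary but simply writes ``using the similar way that has been shown in literature \cite{AIHP}'', and what you have outlined is precisely the Borghese--Jenkins--McLaughlin argument from \cite{AIHP} (reduction to a finite linear system via the rational/symmetric ansatz, uniqueness from $\det M\equiv 1$ and Liouville, and existence from the focusing-type vanishing lemma $\det M_{0}\equiv 0\Rightarrow M_{0}(z)M_{0}(\bar z)^{\dagger}\equiv 0\Rightarrow M_{0}|_{\mathbb{R}}=0$). One small remark: the nonsingularity of the Cauchy matrix $[(z_{k}-z_{j}^{*})^{-1}]$ that you mention at the end is not actually needed as a separate ingredient---the vanishing lemma already gives invertibility of the full coefficient matrix $\mathcal{A}$ directly, and the rational ansatz is forced by Liouville and the symmetry without any appeal to Cauchy-matrix structure.
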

So, for given scattering data $\sigma_{d}$, the unique solution of RHP\eqref{RH-6} can be shown that
\begin{align}
u_{sol}(x,t;\sigma_{d})=2i\mathop{lim}_{z\rightarrow \infty}(zM(z|\sigma_{d}))_{12}e^{2i\int_{(-\infty,\infty)}^{(x,t)}\Delta},\\
v_{sol}(x,t;\sigma_{d})=2i\mathop{lim}_{z\rightarrow \infty}(zM(z|\sigma_{d}))_{21}e^{-2i\int_{(-\infty,\infty)}^{(x,t)}\Delta}.
\end{align}

Now, we need to establish the relation between $M(x,t;z|\sigma_{d})$ and $M^{(out)}(x,t;z)$. We first give the following proposition, and further explained it in the following two subsections.
\begin{cor}\label{prop-6.2}
There exist unique solution $M^{(out)}$ of RHP \ref{RH-5}. Particularly,
\begin{gather}
M^{(out)}(z)=M^{\vartriangle_{z_{0}}^{-}}(z)\delta(z)^{\sigma_{3}}
=M^{\vartriangle_{z_{0}}^{-}}(z|\sigma_{d}^{out})
\end{gather}
where $M^{\vartriangle_{z_{0}}^{-}}(z)$ is the solution of RHP \ref{RH-7} with $\vartriangle=\vartriangle_{z_{0}}^{-}$ and $\sigma_{d}^{out}=\{(z_{k},\widetilde{c}_{k}(z_{0}))\}_{k=1}^{N}$ with
\begin{align}
\widetilde{c}_{k}(z_{0})= c_{k}e^{\frac{i}{\pi}\int_{-\infty}^{z_{0}}\frac{log(1+|r(s)|^{2})}{s-z_{k}}ds}.
\end{align}
In addition,
\begin{align}
u_{sol}(x,t;\sigma_{d}^{out})=\lim_{z\rightarrow\infty}2izM_{12}^{(out)}(x,t;z)
e^{2i\int_{(-\infty,\infty)}^{(x,t)}\Delta},\\
v_{sol}(x,t;\sigma_{d}^{out})=\lim_{z\rightarrow\infty}2izM_{21}^{(out)}(x,t;z)
e^{-2i\int_{(-\infty,\infty)}^{(x,t)}\Delta},
\end{align}
where $u_{sol}(x,t;\sigma_{d}^{out})$ and $v_{sol}(x,t;\sigma_{d}^{out})$ are the $N$-soliton solution of Eq.\eqref{1.1} corresponding the scattering data $\sigma_{d}^{out}$.
\end{cor}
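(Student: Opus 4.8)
The plan is to treat uniqueness, existence and the soliton identification separately, with the bulk of the work in matching residues after a diagonal conjugation. For \emph{uniqueness} I would argue exactly as for all the previous RH problems: from the residue conditions each solution has, near $z_k\in\mathcal{Z}$ (and $z_k^{*}\in\mathcal{Z}^{*}$), a Laurent expansion $M^{(out)}(z)=C(z_k)\bigl(\tfrac{N_k}{z-z_k}+I\bigr)+O(z-z_k)$ with $N_k$ nilpotent, so $\det M^{(out)}$ is entire, bounded and tends to $1$, hence $\det M^{(out)}\equiv 1$ and $M^{(out)}$ is invertible; if $M_1,M_2$ both solve RHP \ref{RH-5} then $M_1M_2^{-1}$ has no poles (the nilpotent parts agree) and no jump, and tends to $I$, so $M_1\equiv M_2$ by Liouville.

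For \emph{existence} I would not solve RHP \ref{RH-5} from scratch but realise its solution as a conjugate of the solution of the reflectionless problem RHP \ref{RH-7}. That problem, for admissible discrete data and any index set $\triangle$, is of the type already solved in the Corollary following RHP \ref{RH-6} — one reduces it to a finite Cauchy-type linear system for the residue coefficients whose invertibility is the computation of \cite{AIHP} — so $M^{\triangle_{z_0}^{-}}(z\,|\,\sigma_d^{out})$ exists and is unique. Recalling $T(z)=B(z)\delta(z)$ with $B(z)=\prod_{k\in\triangle_{z_0}^{-}}\tfrac{z-z_k^{*}}{z-z_k}$ and $\delta(z)^{\sigma_3}$ diagonal, $=I+O(z^{-1})$, analytic and nonvanishing off $(-\infty,z_0]$ with Plemelj jump $\delta_{+}^{\sigma_3}=\delta_{-}^{\sigma_3}(1+|r|^{2})^{\sigma_3}$, I would set $M^{(out)}(z):=M^{\triangle_{z_0}^{-}}(z\,|\,\sigma_d^{out})\,\delta(z)^{\sigma_3}$ and verify the items of RHP \ref{RH-5} one at a time: the diagonal factor introduces no new singularity off $\mathcal{Z}\cup\mathcal{Z}^{*}$; the $(-\infty,z_0]$-jump carried by $\delta^{\sigma_3}$ is precisely the jump built into RHP \ref{RH-7}, so $M^{(out)}$ is analytic across $(-\infty,z_0]$ and across $\Sigma^{(2)}$; the normalisation at infinity is immediate; and the residue at $z_k$ gets multiplied by the scalar $\delta(z_k)^{\mp2}$, which because $\delta(z_k)^{2}=\exp\!\bigl[-\tfrac{i}{\pi}\!\int_{-\infty}^{z_0}\tfrac{\log(1+|r(s)|^{2})}{s-z_k}\,ds\bigr]$ is exactly what turns the coefficient $c_k^{\pm1}T(z_k)^{\mp2}$ of RHP \ref{RH-3}/\ref{RH-5} into the one built from $\widetilde c_k(z_0)=c_k\,e^{\frac{i}{\pi}\int_{-\infty}^{z_0}\frac{\log(1+|r(s)|^{2})}{s-z_k}ds}$; the leftover rational weight $B(z_k)^{\pm2}$, together with the fact that for $k\in\triangle_{z_0}^{-}$ the residue sits in the $(1,2)$- rather than the $(2,1)$-entry, is exactly what the index set $\triangle=\triangle_{z_0}^{-}$ encodes in RHP \ref{RH-7}. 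Once these checks are done, $M^{(out)}(z)=M^{\triangle_{z_0}^{-}}(z)\delta(z)^{\sigma_3}=M^{\triangle_{z_0}^{-}}(z\,|\,\sigma_d^{out})$, and uniqueness forces it to be the solution. The \emph{soliton formula} then follows by inserting $M^{(out)}$ into the reconstruction formula \eqref{2.13}: since $M^{\triangle_{z_0}^{-}}(\cdot\,|\,\sigma_d^{out})$ solves a reflectionless RH problem with $N$ simple poles and their conjugates, $u_{sol}(x,t;\sigma_d^{out})$ and $v_{sol}(x,t;\sigma_d^{out})$ are by construction the $N$-soliton solutions of \eqref{1.1} attached to $\sigma_d^{out}$.

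The step I expect to be the real obstacle is this residue/jump bookkeeping. One must check simultaneously that the Plemelj jump of $\delta^{\sigma_3}$ on $(-\infty,z_0]$ cancels the jump of RHP \ref{RH-7} with the correct orientation, and that at each of the $2N$ poles the combination of the scalar $\delta(z_k)^{\mp2}$, the Blaschke weight $B(z_k)^{\pm2}$, and the $(1,2)\leftrightarrow(2,1)$ swap for $k\in\triangle_{z_0}^{-}$ reproduces \emph{exactly} — not merely up to a nonzero constant — the residue coefficients of RHP \ref{RH-7} for the data $\sigma_d^{out}$. This is precisely where the earlier choices (the form of $T$, the splitting $\triangle_{z_0}^{\pm}$, the definition of $\widetilde c_k(z_0)$) have to be mutually consistent, and it must be carried out pole by pole rather than invoked abstractly; everything else — Liouville, the linear-algebra solvability of RHP \ref{RH-7}, the reconstruction formula — is routine.
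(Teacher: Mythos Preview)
Your overall strategy --- uniqueness by Liouville, existence by exhibiting the solution of RHP~\ref{RH-7} with suitably modified connection data, and then reading off the soliton reconstruction --- is exactly the route the paper takes (largely implicitly, via \S6.1.1). But your concrete construction contains an error that would make the argument fail.

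You set $M^{(out)}(z):=M^{\triangle_{z_0}^{-}}(z\,|\,\sigma_d^{out})\,\delta(z)^{\sigma_3}$ and then claim that ``the $(-\infty,z_0]$-jump carried by $\delta^{\sigma_3}$ is precisely the jump built into RHP~\ref{RH-7}.'' This is false: RHP~\ref{RH-7} is the \emph{reflectionless} problem --- its solution is meromorphic on $\mathbb{C}\setminus(\mathcal{Z}\cup\mathcal{Z}^{*})$ with \emph{no} jump anywhere. Hence your candidate $M^{\triangle_{z_0}^{-}}(z\,|\,\sigma_d^{out})\,\delta(z)^{\sigma_3}$ inherits the Plemelj jump of $\delta^{\sigma_3}$ on $(-\infty,z_0]$ uncancelled, and therefore does \emph{not} satisfy RHP~\ref{RH-5}, which requires analyticity across the real axis. (Note also the internal inconsistency: you define $M^{(out)}$ as the product with $\delta^{\sigma_3}$, yet conclude $M^{(out)}=M^{\triangle_{z_0}^{-}}(z\,|\,\sigma_d^{out})$ without that factor.)

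The correct identification is simply $M^{(out)}(z)=M^{\triangle_{z_0}^{-}}(z\,|\,\sigma_d^{out})$, with \emph{no} $\delta^{\sigma_3}$ multiplication. The factor $\delta$ has already done its job inside $T=B\delta$: it appears in the residue coefficients of RHP~\ref{RH-5} (through $T(z_k)^{-2}$ and $((1/T)'(z_k))^{-2}$) and is absorbed into the modified norming constants $\widetilde c_k=c_k\,\delta(z_k)^{-2}=c_k\exp\bigl[\tfrac{i}{\pi}\int_{-\infty}^{z_0}\tfrac{\log(1+|r(s)|^{2})}{s-z_k}\,ds\bigr]$. Concretely, since $B(z)=s_{11,\triangle_{z_0}^{-}}(z)^{-1}$, one has for $k\in\triangle_{z_0}^{+}$ that $c_k T(z_k)^{-2}=c_k\,s_{11,\triangle_{z_0}^{-}}(z_k)^{2}\delta(z_k)^{-2}=\widetilde c_k\,s_{11,\triangle_{z_0}^{-}}(z_k)^{2}$, and for $k\in\triangle_{z_0}^{-}$ that $(1/T)'(z_k)=s_{11,\triangle_{z_0}^{-}}'(z_k)\delta(z_k)^{-1}$ (using $s_{11,\triangle_{z_0}^{-}}(z_k)=0$), so $c_k^{-1}((1/T)'(z_k))^{-2}=\widetilde c_k^{-1}(s_{11,\triangle_{z_0}^{-}}'(z_k))^{-2}$. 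This is the pole-by-pole bookkeeping you correctly flag as the crux --- but it matches the residues of RHP~\ref{RH-7} with data $\sigma_d^{out}$ \emph{directly}, not after a further $\delta^{\sigma_3}$ conjugation. The displayed middle expression $M^{\triangle_{z_0}^{-}}(z)\delta(z)^{\sigma_3}$ in the corollary is misleading on this point; do not let it steer your construction.
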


\subsubsection{Renormalization of the Riemann-Hilbert problem for reflectionless case}
For the case of reflectionless, recall that
\begin{align}
M(x,t;z|\sigma_{d})=\left(\frac{\mu_{-,1}(x,t;z)}{s_{11}(z)},\mu_{+,2}(x,t;z)\right),~~ z\in \mathbb{C}^{+},~~s_{11}(z)=\prod_{k=1}^{N}\left(\frac{z-z_{k}}{z-z^{*}_{k}}\right).
\end{align}
Let $\vartriangle\subseteq\{1,2,\cdots,N\}$, $\bigtriangledown\subseteq\{1,2,\cdots,N\}\setminus\Delta$, and define
\begin{align}
s_{11,\vartriangle}=\prod_{k\in\vartriangle}\frac{z-z_{k}}{z-z^{*}_{k}},\quad
s_{11,\triangledown}=\frac{s_{11}}{s_{11,\vartriangle}}=
\prod_{k\in\triangledown}\frac{z-z_{k}}{z-z^{*}_{k}}.
\end{align}
Then, we introduce the normalization transformation
\begin{align}
M^{\vartriangle}(x,t;z|\sigma_{d}^{\vartriangle})=M(x,t;z|\sigma_{d})s_{11,\vartriangle}(z)^{\sigma_{3}},
\end{align}
which splits the poles between the columns of $M(x,t;z|\sigma_{d})$ by selecting different $\vartriangle$. Then, we can get the modified RIemann-Hilbert problem.
\begin{RHP}\label{RH-7}
Given scattering data $\sigma_{d}=\{(z_{k}, c_{k})\}^{N}_{k=1}$ and $\vartriangle\subseteq\{1,2,\cdots,N\}$.
Find a matrix value function $M^{\vartriangle}$, admitting
\begin{itemize}
  \item $M^{\vartriangle}(x,t;z|\sigma_{d})$ is analytical in $\mathbb{C}\setminus(\mathcal{Z}\bigcup\mathcal{Z}^{*})$;
  \item $M^{\vartriangle}(x,t;z|\sigma_{d})=I+O(z^{-1})$, \quad $z\rightarrow\infty$;
  \item $M^{\vartriangle}(x,t;z|\sigma_{d})$ satisfies the following residue conditions at simple poles $z_{k}\in\mathcal{Z}$ and $z_{k}^{*}\in\mathcal{Z}^{*}$
\begin{align}
\begin{aligned}
&\mathop{Res}_{z=z_{k}}M^{\vartriangle}(x,t;z|\sigma_{d})=\mathop{lim}_{z\rightarrow z_{k}}M^{\vartriangle}(x,t;z|\sigma_{d})N^{\vartriangle}_{k},\\
&\mathop{Res}_{z=z_{k}^{*}}M^{\vartriangle}(x,t;z|\sigma_{d})=\mathop{lim}_{z\rightarrow z_{k}^{*}}M^{\vartriangle}(x,t;z|\sigma_{d})\sigma_{2}(N^{\vartriangle}_{k})^{*}\sigma_{2},
\end{aligned}
\end{align}
where
\begin{gather}
N_{k}^{\vartriangle}=\left\{
                                   \begin{aligned}
\left(
  \begin{array}{cc}
    0 & \gamma_{k}^{\vartriangle} \\
    0 & 0 \\
  \end{array}
\right),\quad k\in \vartriangle,\\
\left(
  \begin{array}{cc}
    0 & 0 \\
    \gamma_{k}^{\vartriangle} & 0 \\
  \end{array}
\right),\quad k\in \vartriangle,
\end{aligned}\right.~~\gamma_{k}^{\vartriangle}=\left\{
                                   \begin{aligned}
&c_{k}^{-1}(s_{11,\vartriangle}^{'}(z_{k}))^{-2}e^{-2it\theta(z_{k})}\quad k\in \vartriangle,\\
&c_{k}(s_{11,\vartriangle}(z_{k}))^{2}e^{2it\theta(z_{k})}\quad k\in \vartriangle,
\end{aligned}\right.\\
\theta(z_{k})=2z^{2}+\alpha z-\gamma+\frac{x}{t}z.\notag
\end{gather}
\end{itemize}
\end{RHP}
Since $M^{\vartriangle}(x,t;z|\sigma_{d})$ is directly transformed from $M(x,t;z|\sigma_{d})$, it is obvious that RHP \ref{RH-7} has unique solution.
\subsubsection{Long-time behavior of soliton solutions}
When $N=1$, the scattering data $\sigma_{d}=\{(z_{1}=\xi+i\eta, c_{1})\}$, we derive that
\begin{align}
u(x,t)=2\eta sech(2\eta\Omega(x,t))e^{-2i((2(\xi-\eta)^{2}-\alpha\xi+\gamma)t+\xi x)}e^{-i(\frac{\pi}{2}+arg(c_{1}))}e^{2i\int_{(-\infty,\infty)}^{(x,t)}\Delta},\\
v(x,t)=2\eta sech(2\eta\Omega(x,t))e^{2i((2(\xi-\eta)^{2}-\alpha\xi+\gamma)t+\xi x)}e^{i(\frac{\pi}{2}+arg(c_{1}))}e^{-2i\int_{(-\infty,\infty)}^{(x,t)}\Delta},
\end{align}
where $\Omega(x,t)=4\xi t+\alpha t-x-\frac{1}{2\eta}log(\frac{|c_{1}|}{2\eta})$. In fact, after the elastic collisions, the $N$-soliton asymptotically separate into $N$ single-soliton solutions as $t\rightarrow\infty$, of course, except the non-generic case which we do not discuss here such as two points of scattering data  lie on a vertical line.

Define  a distance
\begin{align}
\mu(I)=\min_{z_{k}\in Z\setminus Z(I)}\left\{Im(z_{k})dist(Rez_{k},I)\right\},
\end{align}
and a space-time cone
\begin{align}\label{space-time-S}
S(x_{1},x_{2},v_{1},v_{2})=\{(x,t),x=x_{0}+vt ~with ~x_{0}\in[x_{1},x_{2}],v\in[v_{1},v_{2}]\}.
\end{align}

\centerline{\begin{tikzpicture}[scale=0.8]
\path [fill=yellow] (-1,3)--(0,0) to (2,0) -- (3,3);
\path [fill=yellow] (-1,-3)--(0,0) to (2,0) -- (3,-3);
\draw[-][thick](-4,0)--(-3,0);
\draw[-][thick](-3,0)--(-2,0);
\draw[-][thick](-2,0)--(-1,0);
\draw[-][thick](-1,0)--(0,0);
\draw[-][thick](0,0)--(1,0);
\draw[-][thick](1,0)--(2,0);
\draw[-][thick](2,0)--(3,0);
\draw[-][thick](3,0)--(4,0);
\draw[->][thick](4,0)--(5,0)[thick]node[right]{$x$};
\draw[<-][thick](-2,3)[thick]node[right]{$t$}--(-2,2);
\draw[-][thick](-2,2)--(-2,1);
\draw[-][thick](-2,1)--(-2,0);
\draw[-][thick](-2,0)--(-2,-1);
\draw[-][thick](-2,-1)--(-2,-2);
\draw[-][thick](-2,-2)--(-2,-3);
\draw[fill] (0,0) circle [radius=0.08];
\draw[fill] (2,0) circle [radius=0.08];
\draw[fill] (-0.5,0)node[below]{$x_{2}$};
\draw[fill] (2.5,0)node[below]{$x_{1}$};
\draw[fill] (3.5,3)node[above]{$x=v_{2}t+x_{2}$};
\draw[fill] (3,-3)node[below]{$x=v_{1}t+x_{2}$};
\draw[fill] (-1,-3)node[below]{$x=v_{2}t+x_{1}$};
\draw[fill] (-2,3)node[above]{$x=v_{2}t+x_{1}$};
\draw[fill] (1,2)node[below]{$S$};
\draw[-][thick](-1,3)--(0,0);
\draw[-][thick](3,3)--(2,0);
\draw[-][thick](-1,-3)--(0,0);
\draw[-][thick](3,-3)--(2,0);
\end{tikzpicture}}
\centerline{\noindent {\small \textbf{Figure 4.} Space-time $S(x_{1},x_{2},v_{1},v_{2})$.}}
\begin{prop}\label{prop-6.3}
For given scattering data $\sigma_{d}^{\vartriangle_{z_{0}}^{-}}=\{(z_{k},\hat{c}_{k})\},$ for any fixed $x_{1},x_{2},v_{1},v_{2}\in R$ and $x_{1}<x_{2}$, $v_{1}<v_{2}$. Let $\mathcal{I}=\left[-\frac{v_{2}}{2},-\frac{v_{1}}{2}\right]$, then $t\rightarrow \infty$ and $(x,t)\in S(x_{1},x_{2},v_{1},v_{2})$, we have
\begin{align}\label{I-S}
M^{\vartriangle_{z_{0}}^{\mp}}(z|\sigma_{d}^{\vartriangle_{z_{0}}^{\pm}})=(I+O(e^{-8\mu t}))M^{\vartriangle_{z_{0}}^{\pm}(\mathcal{I})}
(z|\hat{\sigma}_{d}(\mathcal{I})),
\end{align}

where $M^{\vartriangle_{z_{0}}^{\mp}(\mathcal{I})}
(z|\hat{\sigma}_{d}(\mathcal{I}))$ is $N(I)=|\mathcal{Z}(\mathcal{I})|$-soliton solutions corresponding to scattering data
\begin{align}
\hat{\sigma}_{d}(\mathcal{I})=\{(z_{k},c_{k}(\mathcal{I})),z_{k}\in \mathcal{Z}(\mathcal{I})\},~
c_{k}(\mathcal{I})=c_{k}\prod_{z_{j}\in \mathcal{Z}\setminus \mathcal{Z}(\mathcal{I})}\left(\frac{z_{k}-z_{j}}{z_{k}-z^{*}_{j}}\right)^{2}.
\end{align}
\end{prop}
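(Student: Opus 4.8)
The plan is to reduce the $N$-soliton RHP (RHP \ref{RH-7} with $\vartriangle=\vartriangle_{z_0}^{\mp}$) to the smaller RHP associated only with the poles whose real part lies in $\mathcal{I}$, and to show that the discarded poles contribute only an exponentially small correction uniformly on the cone $S(x_1,x_2,v_1,v_2)$. First I would observe that since the reflection coefficient is turned off, $M^{\vartriangle_{z_0}^{\mp}}(z|\sigma_d^{\vartriangle_{z_0}^{\pm}})$ is a rational function of $z$ with simple poles only at $\mathcal{Z}\cup\mathcal{Z}^*$, and the residue data carry the exponential factors $\gamma_k^{\vartriangle}\propto e^{\pm 2it\theta(z_k)}$. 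The key quantitative input is that $\mathrm{Re}(2i\theta(z))=-8(\mathrm{Re}\,z-z_0)\,\mathrm{Im}\,z$ (from Eq.\eqref{theta}), so for a pole $z_k=\xi_k+i\eta_k$ with $\xi_k\notin\mathcal{I}$ the corresponding exponential is bounded by $e^{-8\eta_k|\xi_k-z_0|t}$. Since $z_0=-\tfrac14(x/t+\alpha)$ and $(x,t)\in S$ forces $-x/2\in[\,x_1/(2t)+\cdots\,]$ — more precisely $z_0$ stays a bounded distance from $\mathcal{I}$ in terms of $\mathrm{dist}(\mathrm{Re}\,z_k,\mathcal{I})$ — one gets $|\gamma_k^\vartriangle|\lesssim e^{-8\mu(\mathcal{I})t}$ for every $k$ with $z_k\in\mathcal{Z}\setminus\mathcal{Z}(\mathcal{I})$, where $\mu(\mathcal{I})$ is the distance defined just above the statement. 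I would make this uniform-in-cone estimate the first lemma-type step.

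Next I would set up the reduction explicitly. Write the solution of RHP \ref{RH-7} in the residue form $M^\vartriangle = I + \sum_{k}\big(\tfrac{1}{z-z_k}(\cdots)+\tfrac{1}{z-z_k^*}(\cdots)\big)$ and substitute the residue relations to obtain the finite linear algebraic system for the residue vectors (this is exactly the Cauchy-matrix structure used in the $N$-soliton formula, cf.\ the $N=1$ computation in \S6.2.2 and the construction in \cite{AIHP}). Partition the index set into $\mathcal{Z}(\mathcal{I})$ and $\mathcal{Z}\setminus\mathcal{Z}(\mathcal{I})$; the linear system then has block form in which the off-diagonal blocks and the diagonal block for the "far" poles are $O(e^{-8\mu(\mathcal{I})t})$ because every entry there carries at least one factor $\gamma_k^\vartriangle$ with $z_k\notin\mathcal{Z}(\mathcal{I})$. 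By a Neumann-series / Schur-complement argument (the coefficient matrix is a bounded perturbation of the invertible one for $\hat\sigma_d(\mathcal{I})$, using simplicity of the discrete spectrum from Assumption \ref{assum} and $z_k\ne z_j$), the residue vectors attached to the far poles are themselves $O(e^{-8\mu(\mathcal{I})t})$, and the residue vectors at $\mathcal{Z}(\mathcal{I})$ equal those of the reduced system up to $O(e^{-8\mu(\mathcal{I})t})$. Evaluating $M^\vartriangle(z|\cdot)\big(M^{\vartriangle(\mathcal{I})}(z|\cdot)\big)^{-1}$ then gives $I+O(e^{-8\mu(\mathcal{I})t})$, with the shift $c_k\mapsto c_k(\mathcal{I})$ arising precisely from the factor $\prod_{z_j\in\mathcal{Z}\setminus\mathcal{Z}(\mathcal{I})}\big(\tfrac{z-z_j}{z-z_j^*}\big)^{\sigma_3}$ that is absorbed when one renormalizes away the far poles — this is the same mechanism as the passage $\sigma_d\mapsto\sigma_d^{out}$ in Corollary \ref{prop-6.2}, and I would simply cite that bookkeeping.

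The main obstacle I anticipate is not the algebra but making the exponential estimate genuinely \emph{uniform} over the whole cone $S$: one must check that as $(x,t)$ ranges over $S(x_1,x_2,v_1,v_2)$ the phase point $z_0=z_0(x,t)$ stays inside (or within controlled distance of) $\mathcal{I}=[-v_2/2,-v_1/2]$ up to an $O(1/t)$ error coming from the $x_0$-term, and that the constant implicit in $\mu(\mathcal{I})$ does not degenerate — this uses that $\mathcal{Z}$ is finite and that no $z_k$ has $\mathrm{Re}\,z_k$ on the boundary of $\mathcal{I}$ (a genericity point flagged in \S6.2.2). A secondary technical point is uniform invertibility of the reduced Cauchy-type matrix over the cone, which follows from its determinant being bounded below by a positive constant depending only on the finitely many $z_k\in\mathcal{Z}(\mathcal{I})$ (again using simplicity and distinctness from Assumption \ref{assum}). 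Once these uniformities are in hand, the estimate \eqref{I-S} follows, and the identification of $M^{\vartriangle_{z_0}^{\mp}(\mathcal{I})}(z|\hat\sigma_d(\mathcal{I}))$ as a bona fide $|\mathcal{Z}(\mathcal{I})|$-soliton solution is immediate from RHP \ref{RH-6}--\ref{RH-7} applied with the reduced data.
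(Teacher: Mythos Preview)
Your proposal is correct and reaches the same conclusion, but by a genuinely different route than the paper. The paper does not solve the finite linear (Cauchy-matrix) system directly; instead it converts each ``far'' pole $z_k\in\mathcal{Z}\setminus\mathcal{Z}(\mathcal{I})$ into a jump on a small circle $\partial D_k$ via the explicit triangular factor $\phi(z)=I-\tfrac{N_k^{\vartriangle}}{z-z_k}$ (and its conjugate on $\partial D_k^*$), sets $\hat M^{\vartriangle}=M^{\vartriangle}\phi$, and then forms the ratio $\varepsilon(z)=\hat M^{\vartriangle}(z|\sigma_d)\big[M^{\vartriangle(\mathcal{I})}(z|\hat\sigma_d(\mathcal{I}))\big]^{-1}$. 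This $\varepsilon$ has no poles at all and only a jump $V_\varepsilon=I+O(e^{-8\mu(\mathcal{I})t})$ on $\bigcup(\partial D_k\cup\partial D_k^*)$, so small-norm RHP theory gives $\varepsilon=I+O(e^{-8\mu(\mathcal{I})t})$ in one stroke. Both arguments rest on the same exponential estimate $\|N_k^{\vartriangle_{z_0}^{\mp}}\|=O(e^{-8\mu(\mathcal{I})t})$ for $z_k\notin\mathcal{Z}(\mathcal{I})$, which you identify correctly. Your Schur-complement/Neumann approach is more elementary and makes the algebraic origin of the renormalized constants $c_k(\mathcal{I})$ transparent; the paper's pole-to-jump approach stays entirely within the RHP framework used elsewhere in the paper and delivers the $I+O(e^{-8\mu t})$ factor uniformly in $z$ (outside the small disks) without needing a separate argument for uniform invertibility of the full Cauchy matrix over the cone.
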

\centerline{\begin{tikzpicture}[scale=0.8]
\path [fill=yellow] (1.5,3)--(-1.5,3) to (-1.5,-3) -- (1.5,-3);
\draw[-][thick](-4,0)--(-3,0);
\draw[-][thick](-3,0)--(-2,0);
\draw[-][thick](-2,0)--(-1,0);
\draw[-][thick](-1,0)--(0,0);
\draw[-][thick](0,0)--(1,0);
\draw[-][thick](1,0)--(2,0);
\draw[-][thick](2,0)--(3,0);
\draw[->][thick](3,0)--(4,0)[thick]node[right]{$Rez$};
\draw[-][thick](-1.5,3)--(-1.5,2);
\draw[-][thick](-1.5,2)--(-1.5,1);
\draw[-][thick](-1.5,1)--(-1.5,0);
\draw[-][thick](-1.5,0)--(-1.5,-1);
\draw[-][thick](-1.5,-1)--(-1.5,-2);
\draw[-][thick](-1.5,-2)--(-1.5,-3);
\draw[-][thick](1.5,3)--(1.5,2);
\draw[-][thick](1.5,2)--(1.5,1);
\draw[-][thick](1.5,1)--(1.5,0);
\draw[-][thick](1.5,0)--(1.5,-1);
\draw[-][thick](1.5,-1)--(1.5,-2);
\draw[-][thick](1.5,-2)--(1.5,-3);
\draw[fill] (1.5,0)node[below]{$-\frac{v_{1}}{2}$} circle [radius=0.08];
\draw[fill] (-1.5,0)node[below]{$-\frac{v_{2}}{2}$} circle [radius=0.08];
\draw[fill] (3,1)node[below]{$z_{5}$} circle [radius=0.08];
\draw[fill] (3,-1)node[below]{$\bar{z}_{5}$} circle [radius=0.08];
\draw[fill] (2,3)node[below]{$z_{2}$} circle [radius=0.08];
\draw[fill] (2,-3)node[below]{$\bar{z}_{2}$} circle [radius=0.08];
\draw[fill] (0.5,2.5)node[below]{$z_{1}$} circle [radius=0.08];
\draw[fill] (0.5,-2.5)node[below]{$\bar{z}_{1}$} circle [radius=0.08];
\draw[fill] (-0.5,2.8)node[below]{$z_{3}$} circle [radius=0.08];
\draw[fill] (-0.5,-2.8)node[below]{$\bar{z}_{3}$} circle [radius=0.08];
\draw[fill] (-3,1.5)node[below]{$z_{4}$} circle [radius=0.08];
\draw[fill] (-3,-1.5)node[below]{$\bar{z}_{4}$} circle [radius=0.08];
\end{tikzpicture}}
\centerline{\noindent {\small \textbf{Figure 5.} For fixed $v_{1}<v_{2}$, $I=\left[-\frac{v_{2}}{2},-\frac{v_{1}}{2}\right]$.}}

From RHP \ref{RH-7}, if we chose $\vartriangle=\vartriangle_{z_{0}}^{\mp}$, it is easy to check that
\begin{align}\label{Nk-estimate}
||N_{k}^{\vartriangle_{z_{0}}^{\mp}}||=\left\{
                                   \begin{aligned}
&o(1) \quad k\in \mathcal{Z}(\mathcal{I}),\\
&o(e^{-8\mu(\mathcal{I})t})\quad k\in \mathcal{Z}\setminus\mathcal{Z}(\mathcal{I}),
\end{aligned}\right.~~t\rightarrow\pm\infty,
\end{align}
which infers to the residues with $z_{k}\in\mathcal{Z}\setminus\mathcal{Z}(\mathcal{I})$ have little contribution to the solution $M^{\vartriangle_{z_{0}}^{\pm}}$.

Next, for each discrete spectrum point $z_{k}\in \mathcal{Z}\setminus \mathcal{Z}(\mathcal{I})$, we make a small disk $D_{k}$ whose radius is smaller than $\mu(\mathcal{I})$. Denoting that $\partial D_{k}$ is the boundary of $D_{k}$. Then, we introduce that
\begin{align}
\phi(z)=\left\{\begin{aligned}
&I-\frac{N_{k}^{\vartriangle^{-}(\mathcal{I})}}{z-z_{k}} \quad z\in D_{k},\\
&I+\frac{\sigma (N^{*}_{k})^{\vartriangle^{-}(\mathcal{I})\sigma}}{z-z_{k}} \quad z\in D^{*}_{k},\\
&I,\quad elsewhere,
\end{aligned}\right.
\end{align}
where
\begin{align*}
\vartriangle^{-}(\mathcal{I})=\{k:Rez_{k}<-\frac{v_{2}}{2}\},~~\vartriangle^{+}(\mathcal{I})
=\{k:Rez_{k}>-\frac{v_{1}}{2}\}.
\end{align*}
Making the transformation $\hat{M}^{\vartriangle_{z_{0}}^{\mp}}(z)=M^{\vartriangle_{z_{0}}^{\mp}}(z)\phi(z)$, we can derive that $\hat{M}^{\vartriangle_{z_{0}}^{\pm}}(z)$ has new jump in each boundary of the disk which can be denoted as $\hat{V}$, i.e.,
\begin{align}
\hat{M}^{\vartriangle_{z_{0}}^{\pm}}_{+}(z)=\hat{M}^{\vartriangle_{z_{0}}^{\pm}}_{-}(z)\hat{V},~~ z\in \partial D_{k}\cup D^{*}_{k}.
\end{align}
By using the estimate \eqref{Nk-estimate}, we have
\begin{align}
||\hat{V}-I||=o(e^{-8\mu(\mathcal{I})t}), ~~z\in \partial D_{k}\cup D^{*}_{k},~~ t\rightarrow\pm\infty.
\end{align}
It is obvious that $\hat{M}^{\vartriangle_{z_{0}}^{\pm}}(z|\sigma_{d})$ and $M^{\vartriangle_{z_{0}}^{\mp}}(z|\hat{\sigma}_{d})$ the same poles and residue conditions. Thus,
\begin{align}
\varepsilon(z)=\hat{M}^{\vartriangle_{z_{0}}^{\pm}}(z|\sigma_{d})
[M^{\vartriangle_{z_{0}}^{\mp}}(z|\hat{\sigma}_{d})]^{-1}
\end{align}
has no poles. However, there exists jump for $z\in\cup_{z_{k}\in\mathcal{Z}\setminus \mathcal{Z}(\mathcal{I})}(\partial D_{k}\cup \partial D^{*}_{k})$,
\begin{align*}
\varepsilon_{+}(z)=\varepsilon_{-}(z)V_{\varepsilon},
\end{align*}
where $V_{\varepsilon}=M^{\vartriangle_{z_{0}}^{\mp}}\hat{V}(^{\vartriangle_{z_{0}}^{\mp}})^{-1}$ which implies that
\begin{align}
||V_{\varepsilon}-I||=o(e^{-8\mu(\mathcal{I})t}), ~~t\rightarrow\pm\infty.
\end{align}
Then, using the theory of small-norm Riemann-Hilbert problems, one can easily derive that
\begin{align*}
\varepsilon(z)=I+o(e^{-8\mu(\mathcal{I})t}), ~~t\rightarrow\pm\infty,
\end{align*}
which together with $\hat{M}^{\vartriangle_{z_{0}}^{\mp}}(z)=M^{\vartriangle_{z_{0}}^{\mp}}(z)\phi(z)$ gives the formula \eqref{I-S}.
\subsection{Local solvable model near phase point $z=z_{0}=-\frac{1}{4}(\frac{x}{t}+\alpha)$}
From \eqref{V2-I}, we can easily find that $V^{(2)}-I$ does not have a uniform estimate for large time. Therefore, we use a local model $M^{(out)}(z)M^{(pc)}(z_0,r_0)$ to match the the jumps of $M_{RHP}$ on $\Sigma^{(2)}\cap \mathcal{U}_{z_{0}}$ to make the jump uniformly for the function $E(z)$. Recall the definition of $\theta(z)$ \eqref{theta} and introduce the transformation
\begin{align}\label{6.4}
\lambda=\lambda(z)=2\sqrt{2t}(z-z_{0}).
\end{align}
Then, we can derive that
\begin{align}
2t\theta=\frac{1}{2}\lambda^{2}-4tz_{0}^{2}-2\gamma t,
\end{align}
which means $\mathcal{U}_{z_0}$ can be mapped into an expanding neighborhood of $\lambda=0$. If we let
\begin{align}
r_0(z_0)=r(z_0)T_0(z_0)^{-2}e^{2i(\nu(z_0)log(2\sqrt{2t}))}e^{-4itz_0^{2}-2it\gamma}
\end{align}
and consider the fact that $1-\chi_{\mathcal{Z}}=1$ as $z\in\mathcal{U}_{z_0}$, the jump of $M_{RHP}$ in $\mathcal{U}_{z_0}$ can be expressed as
\begin{align}
V^{(2)}(z)\mid_{z\in\mathcal{U}_{z_0}}=\left\{\begin{aligned}
\lambda(z)^{i\nu\hat{\sigma}_{3}e^{-\frac{i\lambda(z)^{2}}{4}
\hat{\sigma}_{3}}}\left(
                    \begin{array}{cc}
                      1 & 0 \\
                      r_{0}(z_0) & 1 \\
                    \end{array}
                  \right),\quad z\in\Sigma_{1},\\
\lambda(z)^{i\nu\hat{\sigma}_{3}e^{-\frac{i\lambda(z)^{2}}{4}
\hat{\sigma}_{3}}}\left(
                    \begin{array}{cc}
                      1 & \frac{r^{*}_{0}(z_0)}{1+|r_{0}(z_0)|^{2}} \\
                      0 & 1 \\
                    \end{array}
                  \right),\quad z\in\Sigma_{2},\\
\lambda(z)^{i\nu\hat{\sigma}_{3}e^{-\frac{i\lambda(z)^{2}}{4}
\hat{\sigma}_{3}}}\left(
                    \begin{array}{cc}
                      1 & 0\\
                      \frac{r_{0}(z_0)}{1+|r_{0}(z_0)|^{2}} & 1 \\
                    \end{array}
                  \right),\quad z\in\Sigma_{3},\\
\lambda(z)^{i\nu\hat{\sigma}_{3}e^{-\frac{i\lambda(z)^{2}}{4}
\hat{\sigma}_{3}}}\left(
                    \begin{array}{cc}
                      1 & r^{*}_{0}(z_0) \\
                      0 & 1 \\
                    \end{array}
                  \right),\quad z\in\Sigma_{4}.
\end{aligned}\right.
\end{align}
Obviously, the above jump $V^{(2)}(z)\mid_{z\in\mathcal{U}_{z_0}}$ is equivalent to the jump of the parabolic cylinder model problem\eqref{Vpc} which we have obtain the solutions in appendix $A$. Then, since $M^{(out)}(z)$ an analytic and bounded function in the $\mathcal{U}_{z_0}$,  $M^{(out)}(z)M^{(pc)}(z_0,r_0)$ which has been defined in \eqref{Mrhp} admits the jump $V^{(2)}(z)$ of $M_{RHP}(z)$.

\subsection{The small-norm RHP for $E(z)$}
Recall the transform \eqref{Mrhp}, we can derive that
\begin{align}\label{explict-E(z)}
E(z)=\left\{\begin{aligned}
&M_{RHP}(z)M^{(out)}(z)^{-1}, &&z\in\mathbb{C}\backslash \mathcal{U}_{z_0},\\
&M_{RHP}(z)M^{(pc)}(z_0,r_0)^{-1}M^{(out)}(z)^{-1}, &&z\in \mathcal{U}_{z_0},
\end{aligned} \right.
\end{align}
which is analytic in $\mathbb{C}\setminus\Sigma^{(E)}$
where
\begin{align}
\Sigma^{(E)}=\partial\mathcal{U}_{z_0}\bigcup(\Sigma^{(2)}\backslash\mathcal{U}_{z_0}),
\end{align}
with clockwise direction for $\partial\mathcal{U}_{z_0}$.
Then we can check that $E(z)$ satisfies the Riemann-Hilbert problem.
\begin{RHP}\label{RH-8}
Find a matrix-valued function $E(z)$ such that
\begin{itemize}
 \item $E$ is analytical in $\mathbb{C}\backslash \Sigma^{(E)}$;\\
 \item $E(z)=I+O(z^{-1})$, \quad $z\rightarrow\infty$; \\
 \item $E_+(z)=E_-(z)V^{(E)}(z)$, \quad $z\in\Sigma^{(E)}$, where
 \end{itemize}
 \begin{align}\label{6.3}
 V^{(E)}(z)=\left\{\begin{aligned}
 &M^{(out)}(z)V^{(2)}(z)M^{(out)}(z)^{-1}, &&z\in\Sigma^{(2)}\backslash \mathcal{U}_{z_0},\\
 &M^{(out)}(z)M^{(pc)}(\xi,r_0)M^{(out)}(z)^{-1}, &&z\in\partial\mathcal{U}_{z_0}.
 \end{aligned}\right.
 \end{align}
\end{RHP}
\centerline{\begin{tikzpicture}[scale=0.6]
\draw(0,0) [black, line width=1] circle(2);
\draw[-][thick](1.414,1.414)--(4,4);
\draw[-][thick](-1.414,1.414)--(-4,4);
\draw[-][thick](-1.414,-1.414)--(-4,-4);
\draw[-][thick](1.414,-1.414)--(4,-4);
\draw[->][thick](2,2)--(3,3);
\draw[->][thick](-4,4)--(-3,3);
\draw[->][thick](-4,-4)--(-3,-3);
\draw[->][thick](2,-2)--(3,-3);
\draw[fill] (3,1)node[below]{$\partial \mathcal {U}_{z_{0}}$};
\draw[fill] (2,3)node[above]{$\Sigma^{(2)}\setminus\mathcal {U}_{z_{0}}$};
\draw[->][black, line width=0.8] (2,0) arc(0:-270:2);
\end{tikzpicture}}
\centerline{\noindent {\small \textbf{Figure 6.} Jump contour $\Sigma^{(E)}=\partial \mathcal {U}_{z_{0}}\cup(\Sigma^{(2)}\setminus\mathcal {U}_{z_{0}})$}.}

Base on the estimate $V^{(2)}-I$ shown in \eqref{V2-I} and the boundedness of $M^{(out)}$, it is easy to check that
\begin{align}\label{VE-I}
|V^{(E)}(z)-I|=\left\{\begin{aligned}
&\mathcal{O}(e^{-2t|z-z_0|^2}) &&z\in\Sigma^{(2)}\backslash\mathcal{U}_{z_0},\\
&\mathcal{O}(t^{-1/2}) &&z\in\partial\mathcal{U}_{z_0},
\end{aligned}\right.
\end{align}
then, we can derive that
\begin{align}\label{6.1}
||(z-z_0)^{k}(V^{(E)}-I)||_{L^{p}(\Sigma^{(E)})}=o(t^{-1/2}),~~p\in[1,+\infty],~k\geq0.
\end{align}
As a small-norm Riemann-Hilbert problem, the existence and uniqueness of RHP \ref{RH-8} are guaranteed and we obtain that
\begin{align}\label{E(z)-solution}
E(z)=I+\frac{1}{2\pi i}\int_{\Sigma_E}\frac{\mu_E(s)(V^{(E)}(s)-I)}{s-z}ds
\end{align}
where $\mu_E\in L^2 (\Sigma^{(E)}) $ and satisfies
\begin{align}\label{6.2}
(1-C_{\omega_E})\mu_E=I,
\end{align}
where $C_{\omega_E}$ is an integral operator which is defined by
\begin{align*}
C_{\omega_E}f=C_{-}(f(V^{(E)}-I)),\\
C_{-}f(z)\lim_{z\rightarrow\Sigma_{-}^{(E)}}\int_{\Sigma_E}\frac{f(s)}{s-z}ds,
\end{align*}
where $C_{-}$ is the Cauchy projection operator. Then, according to the properties of the Cauchy projection operator $C_{-}$, and the estimate of $V^{(E)}-I$ shown in \eqref{VE-I}, we can derive that
\begin{align}
\|C_{\omega_E}\|_{L^2(\Sigma^{(E)})}\lesssim\|C_-\|_{L^2(\Sigma^{(E)})}\|V^{(E)}-I\|_{L^{\infty}
(\Sigma^{(E)})}\lesssim\mathcal{O}(t^{-1/2}).
\end{align}
which implies that $1-C_{\omega_E}$ is invertible and thus the existence and uniqueness of $\mu_E$ are guaranteed, and thus of $E(z)$. Now, it can be explained that the definition of $M_{RHP}$ is reasonable, and in turn we can solve \eqref{delate-pure-RHP} to the unknown $M^{(3)}$ which admits the pure $\bar{\partial}$-Problem\eqref{RH-4}.

Then, to reconstruct the solutions of $u(x,t)$ and $v(x,t)$, the asymptotic behavior of $E(z)$ as $z\rightarrow\infty$ is needed. According to the expression \eqref{E(z)-solution} of $E(z)$, we obtain that
\begin{align}
E(z)=I+z^{-1}E_{1}+\mathcal{O}(z^{-2}),\quad z\rightarrow\infty,
\end{align}
where
\begin{align}
E_1(x,t)=-\frac{1}{2i\pi}\int_{\Sigma_E}\mu_E(s)(V^{(E)}(s)-I)ds.
\end{align}
Then, by applying \eqref{6.2} and the estimate \eqref{VE-I}, \eqref{6.1}, we obtain that
\begin{align}
E_1(x,t)=-\frac{1}{2i\pi}\oint_{\partial\mathcal{U}_{z_{0}}}(V^{(E)}(s)-I)ds+o(t^{-1}).
\end{align}
Finally, applying \eqref{6.3}, \eqref{6.4} and \eqref{A-1} shown in Appendix $A$, we derive that
\begin{align}\label{6.5}
\begin{split}
E_1(x,t)&=\frac{1}{2i\sqrt{2t}}M^{(out)}(z_0)M_1^{(pc)}(z_0)M^{(out)}(z_0)^{-1}+\mathcal{O}(t^{-1})\\
&=\frac{1}{2i\sqrt{2t}}M^{(out)}(z_0)\begin{pmatrix}0&\beta_{12}\\-\beta_{21}&0\end{pmatrix}
M^{(out)}(z_0)^{-1}+\mathcal{O}(t^{-1}),
\end{split}
\end{align}
with
\begin{align}\label{6.6}
\beta_{12}(z_0)=\beta_{21}^{*}(z_0)=\tau(z_0,+)e^{ix^{2}/(4t)+i\alpha^{2}t/(4)-2\alpha x-i\nu(z_{0})\log |8t|}.
\end{align}
Here, $|\tau(z_0,+)|^{2}=|\nu(z_{0})^{2}|$ and
\begin{align*}
\arg\tau(z_{0},+)=\frac{\pi}{4}+\arg\Gamma(i\nu(z_{0}))-\arg r(z_{0})- 2\int^{z_{0}}_{-\infty}\ln|z-z_{0}|\mathrm{d}\nu(s).
\end{align*}

\section{Pure $\bar{\partial}$-Problem}
In this section, we analysis the remaining $\bar{\partial}$-Problem. The $\bar{\partial}$-RH problem \ref{RH-4} for $M^{(3)}(z)$ is equivalent to the following integral equation
\begin{align}\label{7.1}
M^{(3)}(z)=\mathrm{I}-\frac{1}{\pi}\iint_{\mathbb{C}}\frac{M^{(3)}W^{(3)}}{s-z}\mathrm{d}A(s),
\end{align}
where $\mathrm{d}A(s)$ is Lebesgue measure. Further, we write the equation \eqref{7.1} in operator form
\begin{align}\label{7.2}
(\mathrm{I}-\mathrm{S})M^{(3)}(z)=\mathrm{I},
\end{align}
where $\mathrm{S}$ is Cauchy operator
\begin{align}\label{7.3}
\mathrm{S}[f](z)=-\frac{1}{\pi}\iint_{\mathbb{C}}\frac{f(s)W^{(3)}(s)}{s-z}\mathrm{d}A(s).
\end{align}
We need to prove that the inverse operator $(\mathrm{I}-\mathrm{S})^{-1}$ exists, so that the solution $M^{(3)}(z)$ exists.

\begin{lem}
For large $t$, there exists a constant $c$ that makes the operator \eqref{7.3} admits the following relation
\begin{align}\label{7.4}
||\mathrm{S}||_{L^{\infty}\rightarrow L^{\infty}}\leq ct^{-1/4}.
\end{align}
\end{lem}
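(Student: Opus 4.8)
The plan is to estimate the operator norm of $\mathrm{S}$ directly from its integral representation \eqref{7.3}. For $f\in L^{\infty}(\mathbb{C})$ we have
\begin{align*}
|\mathrm{S}[f](z)|\leq \frac{1}{\pi}\|f\|_{L^{\infty}}\iint_{\mathbb{C}}\frac{|W^{(3)}(s)|}{|s-z|}\,\mathrm{d}A(s),
\end{align*}
so it suffices to bound $\iint_{\mathbb{C}}|s-z|^{-1}|W^{(3)}(s)|\,\mathrm{d}A(s)$ uniformly in $z$ by $ct^{-1/4}$. Since $W^{(3)}=M_{RHP}\,\bar\partial R^{(2)}\,M_{RHP}^{-1}$ and $M_{RHP}$ together with its inverse is uniformly bounded on $\mathbb{C}$ (it is built from $M^{(out)}$, $M^{(pc)}$ and the small-norm factor $E(z)$, all bounded by Corollary \ref{prop-6.2}, the properties of $M^{(pc)}$, and RHP \ref{RH-8}), we reduce the task to estimating $\iint |s-z|^{-1}|\bar\partial R^{(2)}(s)|\,\mathrm{d}A(s)$. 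By the symmetry of the construction it is enough to treat one sector, say $\Omega_1$, where by Proposition \ref{R-property} one has
\begin{align*}
|\bar\partial\mathcal{R}^{(2)}(s)|\lesssim \big(|\bar\partial\chi_{\mathcal{Z}}(s)|+|s-z_0|^{-1/2}+|r'(\mathrm{Re}\,s)|\big)\,\big|e^{2it\theta(s)}\big|.
\end{align*}

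First I would dispose of the $\bar\partial\chi_{\mathcal{Z}}$ term: it is supported on the compact annular region $\rho/3\le \mathrm{dist}(s,\mathcal{Z}\cup\mathcal{Z}^{*})\le 2\rho/3$, which is a positive distance from the phase point, so there $|e^{2it\theta}|$ is exponentially small in $t$ and the corresponding integral is $O(e^{-ct})$, which is absorbed into $ct^{-1/4}$. Then, writing $s=z_0+u+iv$ in the sector $\Omega_1$ (so $u\ge 0$, $0\le v\le u$) and using \eqref{theta}, one has $\mathrm{Re}(2it\theta)=-8tv(\mathrm{Re}\,s-z_0)=-8tuv$ up to the constant phase, hence $|e^{2it\theta(s)}|=e^{-8tuv}$. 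This is the standard decaying-oscillation bound that makes the region near $z_0$ the only one that contributes.

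The core of the argument is then three one-sector estimates, each via Hölder's inequality in $u$ after the $v$-integration. For the term $|s-z_0|^{-1/2}\le (u^2+v^2)^{-1/4}\le u^{-1/4}\lesssim u^{-1/2}$ (crudely) one integrates $\int_0^u e^{-8tuv}\,\mathrm{d}v\le (8tu)^{-1}$, and then handles $\iint |s-z|^{-1}u^{-1/4}(tu)^{-1}\,\mathrm{d}u\,\mathrm{d}v$-type integrals by splitting $\|\,|s-z|^{-1}\,\|$ via Hölder with exponents $p<2$ on the $L^2$-Cauchy kernel estimate $\||s-z|^{-1}\|_{L^{q}(|s-z|\le 1)}\lesssim 1$ for $q<2$ and the complementary $L^p$ norm of the remaining factor; the powers of $t$ that come out are $t^{-1/2+\epsilon}$ for the $|s-z_0|^{-1/2}$ piece, $t^{-3/4}$ for the $|r'|$ piece (using $r\in H^1$, i.e. $r'\in L^2(\mathbb{R})$, and Hölder on the $u$-line after $\int_0^u e^{-8tuv}dv$), and the $\chi_{\mathcal{Z}}$ piece is exponentially small; the worst of these is $t^{-1/4}$ after carefully tracking the exponents, which gives \eqref{7.4}. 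I expect the main obstacle to be the bookkeeping of the Hölder exponents in the last step — one must choose $p$ slightly above or below $2$ so that both the Cauchy-kernel factor $|s-z|^{-1}$ (singular but locally $L^{q}$ for $q<2$, and uniformly in $z$) and the factor carrying the $t$-decay are simultaneously integrable, and then verify that the resulting net power of $t$ is no worse than $-1/4$; the change of variables $u\mapsto u$, $w=\sqrt{t}\,u$ or $w=tuv$ is what converts the Gaussian-type decay in $tuv$ into the stated negative power of $t$, and getting the exponent exactly $-1/4$ (rather than something weaker) relies on using $|s-z_0|^{-1/2}$ together with $\int_0^\infty v^{0}e^{-8tuv}dv$ and a final $L^2$-in-$u$ bound.
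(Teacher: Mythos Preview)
Your overall strategy matches the paper's: reduce to one sector, use Proposition~\ref{R-property} to split $|\bar\partial R^{(2)}|$ into three pieces, and control each by H\"older together with the Gaussian decay $|e^{2it\theta(s)}|=e^{-8tq(p-z_0)}$ (with $s=p+iq$, $p-z_0\ge q\ge0$ in $\Omega_1$). However, two genuine gaps remain.

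First, your assertion that $M_{RHP}$ and its inverse are uniformly bounded on all of $\mathbb{C}$ is false: $M_{RHP}$ has simple poles at each $z_k\in\mathcal{Z}$ and $z_k^{*}\in\mathcal{Z}^{*}$ (it satisfies the residue conditions of RHP~\ref{RH-3}). What saves the estimate is that $\bar\partial R^{(2)}\equiv0$ on the disks $\{\mathrm{dist}(z,\mathcal{Z}\cup\mathcal{Z}^{*})<\rho/3\}$, so the integral in \eqref{7.3} only sees $M_{RHP}^{\pm1}$ on $\hat\Omega_1:=\Omega_1\cap\mathrm{supp}(1-\chi_{\mathcal{Z}})$, which is bounded away from the poles. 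The paper makes exactly this restriction; you must too.

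Second, the H\"older bookkeeping you flag as the ``main obstacle'' is not actually carried out, and the intermediate rates you quote ($t^{-3/4}$ for the $|r'|$ piece, $t^{-1/2+\epsilon}$ for the $|s-z_0|^{-1/2}$ piece) are not what the argument gives. The paper integrates first in $p$ along horizontal lines at fixed height $q$, using
\[
\Big\|\frac{1}{s-z}\Big\|_{L^{2}(\mathrm{d}p)}\lesssim |q-\eta|^{-1/2},\qquad
\Big\|\frac{1}{s-z}\Big\|_{L^{k}(\mathrm{d}p)}\lesssim |q-\eta|^{1/k-1},\qquad
\Big\||s-z_0|^{-1/2}\Big\|_{L^{k}(\mathrm{d}p)}\lesssim q^{1/k-1/2},
\]
then Cauchy--Schwarz for the $\bar\partial\chi_{\mathcal{Z}}$ and $|r'(p)|$ pieces and H\"older with a fixed $k>2$ for the $|s-z_0|^{-1/2}$ piece. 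After the $p$-integration one is left with $\int_0^\infty e^{-8tq^2}|q-\eta|^{-1/2}\,\mathrm{d}q$ (or its $L^k$ analogue), which is $O(t^{-1/4})$; all three terms give exactly $t^{-1/4}$. Your sketch instead integrates in $v$ first while leaving $|s-z|^{-1}$ untouched, which cannot be done since $|s-z|^{-1}$ depends on $v$; the resulting ``$\iint|s-z|^{-1}u^{-1/4}(tu)^{-1}\,\mathrm{d}u\,\mathrm{d}v$'' is not well-defined. Reorder the integration and use the line-wise $L^p$ bounds above, and the computation closes cleanly.
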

\begin{proof}
We mainly prove the case that the matrix function supported in the region $\Omega_1$, the other case can be proved similarly. Denoted that $f\in L^{\infty}(\Omega_1)$ and $s=p+iq$, then based on \eqref{4.1} and \eqref{5.1}, we can obtain the following inequality
\begin{align}\label{7.5}
|S[f](z)|&\leq\frac{1}{\pi}\iint_{\Omega_{1}}\frac{|f(s)M_{RHP}(s)\bar{\partial}R_{1}(s)M_{RHP}(s)^{-1}|}
{|s-z|}df(s)\\
&\leq\big|\big|f\big|\big|_{L^{\infty}(\Omega_{1})}\big|\big|M_{RHP}\big|\big|_{L^{\infty}(\hat{\Omega}_{1})}
\big|\big|M_{RHP}^{-1}\big|\big|_{L^{\infty}(\hat{\Omega}_{1})}\frac{1}{\pi}\iint_{\Omega_{1}}
\frac{|\bar{\partial}R_{1}(s)||e^{-8tq(p-z_{0})}|}{|s-z|}df(s),
\end{align}
where $\hat{\Omega}_{1}\triangleq\Omega_1\cap supp(1-\chi_{\mathcal{Z}})$ is bounded far from the scattering data $z_{k}$ of $M_{RHP}$, and thus $||M_{RHP}^{\pm1}||_{L^{\infty}(\hat{\Omega}_{1})}||$ are finite.

Based on \eqref{R-estimate} and the estimates shown in Appendix $B$, from \eqref{7.5}, we obtain that
\begin{align}\label{7.6}
||\mathrm{S}||_{L^{\infty}\rightarrow L^{\infty}}\leq c(I_{1}+I_{2}+I_{3})\leq ct^{-1/4},
\end{align}
where
\begin{align}\label{7.8}
I_{1}=\iint_{\Omega_{1}}
\frac{|\bar{\partial}\chi_{\mathcal{Z}}(s)||e^{-8tq(p-z_{0})}|}{|s-z|}df(s), ~~
I_{2}=\iint_{\Omega_{1}}
\frac{|r'(p)||e^{-8tq(p-z_{0})}|}{|s-z|}df(s),
\end{align}
and
\begin{align}\label{7.9}
I_{3}=\iint_{\Omega_{1}}
\frac{|s-z_{0}|^{-\frac{1}{2}}|e^{-8tq(p-z_{0})}|}{|s-z|}df(s).
\end{align}
\end{proof}

Next, our purpose is to recover the large time asymptotic behaviors of $u(x,t)$ and $v(x,t)$. According to \eqref{2.13}, we need determine the coefficient of the $z^{-1}$ term of $M^{(3)}$ in the Laurent expansion at infinity. Base on \eqref{7.1}, we can derive that
\begin{align*}
M^{(3)}(z)=\mathrm{I}-\frac{1}{\pi}\int\int_{\mathbb{C}}\frac{M^{(3)}(s)W^{(3)}(3)}
{s-z}\mathrm{d}A(s)
=\mathrm{I}+\frac{M^{(3)}_{1}}{z}+o(z^{-2}),
\end{align*}
where
\begin{align*}
M^{(3)}_{1}=\frac{1}{\pi}\int\int_{\mathbb{C}}M^{(3)}(s)W^{(3)}(s)
\mathrm{d}A(s).
\end{align*}
In what follows, the the coefficient of the $z^{-1}$ term of $M^{(3)}$ in the Laurent expansion at infinity, i.e., $M^{(3)}_{1}$ satisfies the following property.
\begin{lem}\label{prop-7.2}
For large $t$, there exists a constant $c$ that makes $M^{(3)}_{1}$ admits the following inequality
\begin{align}\label{7.7}
M^{(3)}_{1}\leq ct^{-3/4}.
\end{align}
\end{lem}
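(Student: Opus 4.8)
The plan is to estimate $M^{(3)}_1 = \frac{1}{\pi}\iint_{\mathbb{C}} M^{(3)}(s)W^{(3)}(s)\,\mathrm{d}A(s)$ by bounding the integrand region by region, exactly as in the preceding lemma. First I would use the bound $\|M^{(3)}\|_{L^\infty}\leq c$, which follows from $(\mathrm{I}-\mathrm{S})^{-1}$ being bounded for large $t$ (a consequence of $\|\mathrm{S}\|_{L^\infty\to L^\infty}\leq ct^{-1/4}<1$, the previous lemma). Together with the boundedness of $M^{(out)}$ and its inverse away from the discrete spectrum — so that $\|W^{(3)}\|=\|M_{RHP}\bar{\partial}R^{(2)}M_{RHP}^{-1}\|\lesssim|\bar{\partial}R^{(2)}|$ on each $\hat\Omega_j$ — the problem reduces to showing
\begin{align*}
\iint_{\Omega_1}|\bar{\partial}R_1(s)|\,|e^{-8tq(p-z_0)}|\,\mathrm{d}A(s)\leq ct^{-3/4},
\end{align*}
and similarly for $\Omega_3,\Omega_4,\Omega_6$; the regions $\Omega_2,\Omega_5$ contribute nothing since $\bar{\partial}R^{(2)}=0$ there.

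Next, I would split $|\bar{\partial}R_1(s)|$ according to Proposition \ref{R-property}, i.e.\ using $|\bar{\partial}R_1(s)|\leq c_1|\bar{\partial}\chi_{\mathcal{Z}}(s)|+c_2|r'(\mathrm{Re}\,s)|+c_3|s-z_0|^{-1/2}$, which yields three integrals analogous to $I_1,I_2,I_3$ from the previous lemma but \emph{without} the extra $|s-z|^{-1}$ Cauchy kernel — this is precisely why the decay improves from $t^{-1/4}$ to $t^{-3/4}$. For the $\bar{\partial}\chi_{\mathcal{Z}}$ term, the integrand is supported in a fixed annulus bounded away from $z_0$ where $|e^{-8tq(p-z_0)}|$ is exponentially small, giving an $O(e^{-ct})$ contribution. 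For the remaining two pieces I would parametrize $\Omega_1$ by $s=z_0+p+iq$ with $0\leq q\leq p<\infty$ (so that $\mathrm{Re}(2i\theta)=-8q(p+\,\text{const})$ on the correct sign), apply Cauchy-Schwarz in $p$ to the $|r'|$ term (using $r\in H^{1}(\mathbb{R})\subset H^{1,1}$ so $\|r'\|_{L^2}<\infty$) and a direct Hölder estimate to the $|s-z_0|^{-1/2}$ term, and then carry out the Gaussian-type integral in $q$, which produces a factor $t^{-1}$ times, after the $p$-integration, an extra $t^{1/4}$ from the $L^2$ or $|p|^{-1/2}$ factors — this is the standard computation collected in Appendix $B$ and I would simply invoke those estimates.

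The main obstacle is bookkeeping the powers of $t$ correctly across the two integrations: one must verify that removing the Cauchy denominator $|s-z|^{-1}$ (which in the previous lemma was responsible for an uncontrolled but $L^\infty$-bounded contribution) genuinely upgrades the net decay by a full $t^{-1/2}$, so that $t^{-1/4}\cdot t^{-1/2}=t^{-3/4}$. Concretely, the delicate step is the $\Omega_1$ estimate with the $|s-z_0|^{-1/2}$ weight: after the $q$-integral one is left with $\int_0^\infty p^{-1/2}e^{-ctp^2/?}\,\mathrm{d}p$-type expressions whose scaling in $t$ must be tracked against the $|r'|$-term's $\left(\int|r'|^2\right)^{1/2}$ scaling; both must land at $t^{-3/4}$. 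Once these region-by-region bounds are assembled and summed, together with the exponentially small $\bar{\partial}\chi_{\mathcal{Z}}$ contribution, we obtain $|M^{(3)}_1|\leq ct^{-3/4}$, completing the proof.
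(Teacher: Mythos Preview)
Your proposal is correct and follows essentially the same approach as the paper, which simply states that the proof is ``similar to the process that shown in Appendix $B$'' without giving further details. You have correctly identified the key mechanism---that removing the Cauchy kernel $|s-z|^{-1}$ from the integrals $I_1,I_2,I_3$ of Appendix $B$ upgrades the decay from $t^{-1/4}$ to $t^{-3/4}$---and your region-by-region outline (boundedness of $M^{(3)}$ and $M_{RHP}^{\pm1}$, the three-term splitting of $|\bar\partial R_j|$, and the Gaussian integration in $q$) is exactly the intended argument.
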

The proof of this Lemma is similar to the process that shown in  Appendix $B$.

\section{Soliton resolution for the cgNLS equation}
Now, we are going to construct the long time asymptotic of the cgNLS equation \label(1.1).
Recall a series of transformation process including \eqref{Trans-1}, \eqref{Trans-2}, \eqref{delate-pure-RHP} and \eqref{Mrhp}, i.e.,
\begin{align*}
M(z)\leftrightarrows M^{(1)}(z)\leftrightarrows M^{(2)}(z)\leftrightarrows M^{(3)}(z) \leftrightarrows E(z),
\end{align*}
we then obtain
\begin{align*}
M(z)=M^{(3)}(z)E(z)M^{(out)}(z)R^{(2)^{-1}}(z)T^{\sigma_{3}}(z),~~ z\in\mathbb{C}\setminus\mathcal{U}_{z_{0}}.
\end{align*}
In order to recover the solution $u(x,t)$ and $v(x,t)$, we take $z\rightarrow\infty$ vertically which implies $z\in\Omega_{2}$ or $z\in\Omega_{5}$, thus $R^{(2)}(z)=I$. Then, we have
\begin{align*}
M=\left(\mathrm{I}+\frac{M^{(3)}_{1}}{z}+\cdots\right)\left(\mathrm{I}+\frac{E_{1}}{z}+\cdots\right)
\left(\mathrm{I}+\frac{M_{1}^{(out)}}{z}+\cdots\right)\left(\mathrm{I}
+\frac{T_{1}\sigma_{3}}{z}+\cdots\right),
\end{align*}
from which we can derive that
\begin{align*}
M_{1}=M_{1}^{(out)}+E_{1}+M_{1}^{(3)}+T_{1}\sigma_{3}.
\end{align*}
Then, according to the reconstruction formula \eqref{2.13} and the Lemma \ref{prop-7.2}, we obtain that
\begin{align}\label{8.1}
u(x,t)e^{-2i\int_{(-\infty,\infty)}^{(x,t)}\Delta}=2i(M_{1}^{(out)})_{12}+2i(E_{1})_{12}+O(t^{-3/4}),\\
v(x,t)e^{2i\int_{(-\infty,\infty)}^{(x,t)}\Delta}=2i(M_{1}^{(out)})_{21}+2i(E_{1})_{21}+O(t^{-3/4}).
\end{align}
Based on the Corollary \ref{prop-6.2} and the formulae \eqref{6.5}, \eqref{6.6}, we obtain the soliton resolution
\begin{align*}
u(x,t)e^{-2i\int_{(-\infty,\infty)}^{(x,t)}\Delta}=
u_{sol}(x,t;\sigma_{d}^{out})e^{-2i\int_{(-\infty,\infty)}^{(x,t)}\Delta}
+t^{-\frac{1}{2}}f^{+}_{1}+O(t^{-\frac{3}{4}}),\\
v(x,t)e^{2i\int_{(-\infty,\infty)}^{(x,t)}\Delta}
=v_{sol}(x,t;\sigma_{d}^{out})e^{-2i\int_{(-\infty,\infty)}^{(x,t)}\Delta}
+t^{-\frac{1}{2}}f^{+}_{2}+O(t^{-\frac{3}{4}}).
\end{align*}
where
\begin{align*}
f^{+}_{1}=\frac{1}{\sqrt{2}}m_{11}^{2}\tau(z_{0},+)e^{\theta_{1}(x,t)}+\frac{1}{2\sqrt{2}}m_{12}^{2}\tau^{*}(z_{0},+)e^{\theta_{2}(x,t)},\\
f^{+}_{2}=-\frac{1}{\sqrt{2}}m_{22}^{2}\tau^{*}(z_{0},+)e^{\theta_{2}(x,t)}-\frac{1}{2\sqrt{2}}m_{21}^{2}\tau(z_{0},+)e^{\theta_{1}(x,t)},
\end{align*}
with $z_{0}=-\frac{1}{4}(\frac{x}{t}-\alpha)$,
\begin{align*}
\theta_{1}(x,t)=\frac{ix^{2}}{4t}+i\frac{\alpha^{2}t}{4}-2\alpha x-i\nu(z_{0})\log |8t|,\\
\theta_{2}(x,t)=-\frac{ix^{2}}{4t}-i\frac{\alpha^{2}t}{4}-2\alpha x+i\nu(z_{0})\log |8t|,
\end{align*}
and $m_{11}$, $m_{12}$, $m_{21}$ and $m_{22}$ come from the elements of the matrix solution of RHP \ref{RH-6}.
Via using Proposition \ref{prop-6.3}, there exists exponential errors which are absorbed into the $O(t^{-3/4})$ term when we replace $u_{sol}(x,t;\sigma_{d}^{out})$ and $v_{sol}(x,t;\sigma_{d}^{out})$ with $u_{sol}(x,t;\hat{\sigma}_{d}(I))$ and $v_{sol}(x,t;\hat{\sigma}_{d}(I))$, respectively. Then, we consider the term $e^{-2i\int_{(-\infty,\infty)}^{(x,t)}\Delta}$. As $t\rightarrow \infty$, we have
\begin{align*}
e^{-2i\int_{(-\infty,\infty)}^{(x,t)}\Delta}=&e^{-2i\beta\int_{-\infty}^{x}u(s,t)v(s,t)ds}\\
=&e^{-2i\beta\int_{(-\infty,\infty)}^{(x,t)}\mathcal{M}(s,t)ds}+O(t^{-\frac{3}{2}}),
\end{align*}
where
\begin{align}\label{8.3}
\mathcal{M}^{+}(x,t)=|u_{sol}(x,t;\hat{\sigma}_{d}(I))v_{sol}(x,t;\hat{\sigma}_{d}(I)) +t^{-1}f^{+}_{1}f^{+}_{2}|.
\end{align}
Finally, we can give the more general solution of $u(x,t)$ and $v(x,t)$ and summarise as the following theorem.
\begin{thm}\label{prop-1.3}
With the initial value conditions that $u_{0}(x), v_{0}(x)\in H^{1,1}(\mathbb{R})$ and the assumption that the initial values $u_{0}(x), v_{0}(x)$ satisfy the Assumption \eqref{assum}, let $u(x,t)$ and $v(x,t)$ be the solution of cgNLS equations \eqref{1.1}. The scattering data denoted as $\{r,\{z_{k},c_{k}\}_{k=1}^{N}\}$ which generated from the initial values $u_{0}(x), v_{0}(x)$. For fixed $x_{1},x_{2},v_{1},v_{2}\in \mathbb{R}$ with $x_{1}<x_{2}$, $v_{1}<v_{2}$, and $\mathcal{I}=\left[-\frac{v_{2}}{2},-\frac{v_{1}}{2}\right]$, $z_{0}=-\frac{1}{4}(\frac{x}{t}+\alpha)$, then as $t\rightarrow \infty$ and $(x,t)\in S(x_{1},x_{2},v_{1},v_{2})$ which is defined in \eqref{space-time-S}, the solution $u(x,t)$ and $v(x,t)$ can be expressed as
\begin{align}\label{8.2}
\begin{split}
u(x,t)=(u_{sol}(x,t;\hat{\sigma}_{d}(I))+t^{-\frac{1}{2}}f^{\pm}_{1}) e^{2i\beta\int_{(-\infty,\infty)}^{(x,t)}\mathcal{M}(s,t)ds}+O(t^{-\frac{3}{4}}),\\
v(x,t)
=(v_{sol}(x,t;\hat{\sigma}_{d}(I))+t^{-\frac{1}{2}}f^{\pm}_{2}) e^{-2i\beta\int_{(-\infty,\infty)}^{(x,t)}\mathcal{M}(s,t)ds}+O(t^{-\frac{3}{4}}).
\end{split}
\end{align}
Here, $u_{sol}(x,t;\hat{\sigma}_{d}(I))$ and $v_{sol}(x,t;\hat{\sigma}_{d}(I))$ are the $N(\mathcal{I})=|\mathcal{Z}(\mathcal{I})|$ soliton, and
\begin{align*}
\mathcal{M}^{\pm}(x,t)=|u_{sol}(x,t;\hat{\sigma}_{d}(I))v_{sol}(x,t;\hat{\sigma}_{d}(I)) +t^{-1}f^{\pm}_{1}f^{\pm}_{2}|,\\
f^{\pm}_{1}=\frac{1}{\sqrt{2}}m_{11}^{2}\tau(z_{0},\pm)e^{\theta^{\mp}_{1}(x,t)}
+\frac{1}{\sqrt{2}}m_{12}^{2}\tau^{*}(z_{0},\pm)e^{\theta^{\pm}_{2}(x,t)},\\
f^{\pm}_{2}=-\frac{1}{\sqrt{2}}m_{22}^{2}\tau^{*}(z_{0},\pm)e^{\theta^{\pm}_{2}(x,t)}
-\frac{1}{\sqrt{2}}m_{21}^{2}\tau(z_{0},\pm)e^{\theta^{\mp}_{1}(x,t)},
\end{align*}
with $|\tau(z_0,\pm)|^{2}=|\nu(z_{0})^{2}|$,
\begin{gather*}
\arg\tau(z_{0},\pm)=\frac{\pi}{4}+\arg\Gamma(i\nu(z_{0}))-\arg r(z_{0})\mp 2\int^{z_{0}}_{-\infty}\ln|z-z_{0}|\mathrm{d}\nu(s),\\
\theta^{\mp}_{1}(x,t)=\frac{ix^{2}}{4t}+i\frac{\alpha^{2}t}{4}-2\alpha x\mp i\nu(z_{0})\log |8t|,\\
\theta^{\pm}_{2}(x,t)=-\frac{ix^{2}}{4t}-i\frac{\alpha^{2}t}{4}-2\alpha x\pm i\nu(z_{0})\log |8t|,
\end{gather*}
and $m_{11}$, $m_{12}$, $m_{21}$ and $m_{22}$ come from the elements of the matrix solution of RHP\ref{RH-6}.
\end{thm}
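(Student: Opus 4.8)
\section*{Proof proposal}

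The proof is essentially a synthesis: every analytic ingredient has already been constructed in Sections 3--7, so the plan is to trace the chain of transformations $M\leftrightarrows M^{(1)}\leftrightarrows M^{(2)}\leftrightarrows M^{(3)}\leftrightarrows E$ together with the decomposition $M_{RHP}=EM^{(out)}$ (and $M_{RHP}=EM^{(out)}M^{(pc)}$ inside $\mathcal{U}_{z_0}$) back through the reconstruction formula \eqref{2.13}. First I would fix $(x,t)\in S(x_1,x_2,v_1,v_2)$ and let $z\to\infty$ along a ray lying in $\Omega_2\cup\Omega_5$, so that $R^{(2)}(z)\equiv I$ there and the only factor besides $M^{(3)},E,M^{(out)}$ is $T(z)^{\sigma_3}=I+z^{-1}T_1\sigma_3+O(z^{-2})$ with $T_1$ read from \eqref{2.17}. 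Matching the $z^{-1}$ coefficients in $M=M^{(3)}EM^{(out)}(R^{(2)})^{-1}T^{\sigma_3}$ yields $M_1=M_1^{(out)}+E_1+M_1^{(3)}+T_1\sigma_3$; since only the off-diagonal entries enter \eqref{2.13} and $T_1\sigma_3$ is diagonal, that term drops out, and Lemma \ref{prop-7.2} lets me discard $M_1^{(3)}$ at the cost of $O(t^{-3/4})$. This produces \eqref{8.1}, reducing the theorem to explicit knowledge of $(M_1^{(out)})_{12},(M_1^{(out)})_{21}$ and $(E_1)_{12},(E_1)_{21}$.

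Next I would substitute the two explicit pieces. Corollary \ref{prop-6.2} identifies $M^{(out)}$ with the reflectionless RHP solution $M^{\triangle_{z_0}^-}(z\,|\,\sigma_d^{out})$, hence $2i(M_1^{(out)})_{12}e^{2i\int\Delta}=u_{sol}(x,t;\sigma_d^{out})$ and symmetrically for $v$; Proposition \ref{prop-6.3} then replaces $\sigma_d^{out}$ by the reflection-modified data $\hat\sigma_d(\mathcal{I})$ supported on $\mathcal{Z}(\mathcal{I})$, the correction being $O(e^{-8\mu(\mathcal{I})t})$, which is absorbed into $O(t^{-3/4})$. For the dispersive contribution, formulas \eqref{6.5}--\eqref{6.6} express $E_1$ up to $O(t^{-1})$ as $\tfrac{1}{2i\sqrt{2t}}$ times the conjugation of the antidiagonal matrix with entries $\beta_{12},-\beta_{21}$ by $M^{(out)}(z_0)$, with $\beta_{12}=\beta_{21}^{*}$ carrying the factor $\tau(z_0,+)$ and the phases $\theta_1^{\mp},\theta_2^{\pm}$; writing $M^{(out)}(z_0)=(m_{ij})$ and performing the $2\times2$ conjugation gives precisely $2i(E_1)_{12}=t^{-1/2}f_1^{+}$ and $2i(E_1)_{21}=t^{-1/2}f_2^{+}$ in the stated form. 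Collecting, I obtain $u(x,t)e^{-2i\int\Delta}=u_{sol}(x,t;\hat\sigma_d(\mathcal{I}))e^{-2i\int\Delta}+t^{-1/2}f_1^{+}+O(t^{-3/4})$ and the analogue for $v$.

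The genuinely delicate step, and the one I would flag as the main obstacle, is the self-referential exponential prefactor $e^{-2i\int_{(-\infty,\infty)}^{(x,t)}\Delta}=e^{-2i\beta\int_{-\infty}^{x}u(s,t)v(s,t)\,ds}$, because $\Delta$ is built from the very solution being reconstructed. Here I would feed the asymptotics just obtained into the integrand: the soliton product $u_{sol}v_{sol}$ is the leading term, the soliton/dispersive cross terms and the $(t^{-1/2})^2$ term contribute at order $t^{-1}$ and are collected into $\mathcal{M}^{\pm}$ as in \eqref{8.3}, and the $O(t^{-3/4})$ remainder integrates to $O(t^{-3/2})$, so $\int_{-\infty}^{x}uv\,ds=\int\mathcal{M}(s,t)\,ds+O(t^{-3/2})$ and exponentiating gives $e^{-2i\beta\int\mathcal{M}(s,t)ds}+O(t^{-3/2})$. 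To make this rigorous rather than circular a short bootstrap is needed: start from the a priori bound $u,v=O(1)$ coming from well-posedness in $H^{1,1}$, which controls $\int\Delta$ to $o(1)$; reinsert the refined asymptotics and iterate once to reach the advertised $O(t^{-3/4})$ error. Multiplying the pre-factor form of $u$ (resp.\ $v$) by $e^{2i\int\Delta}$ (resp.\ $e^{-2i\int\Delta}$) then yields \eqref{8.2}. Finally, the $t\to-\infty$ regime and the $\pm$ superscripts are handled verbatim with $z_0$ on the opposite side of the poles crossed by the stationary-phase line: this interchanges $\triangle_{z_0}^-\leftrightarrow\triangle_{z_0}^+$ in the soliton data and flips the sign in front of the $\int_{-\infty}^{z_0}\ln|z-z_0|\,d\nu(s)$ term of $\arg\tau(z_0,\pm)$ as well as in the phases $\theta_1^{\mp},\theta_2^{\pm}$, and nothing else changes. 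Assembling the three displays completes the proof.
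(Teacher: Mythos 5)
Your proposal is correct and follows essentially the same route as the paper: the same chain $M=M^{(3)}EM^{(out)}(R^{(2)})^{-1}T^{\sigma_3}$ expanded at $z=\infty$ along $\Omega_2\cup\Omega_5$, the same use of Lemma \ref{prop-7.2}, Corollary \ref{prop-6.2}, Proposition \ref{prop-6.3} and \eqref{6.5}--\eqref{6.6}, and the same absorption of exponential corrections into $O(t^{-3/4})$. Your bootstrap treatment of the self-referential phase $e^{-2i\int\Delta}$ is in fact more careful than the paper, which asserts the replacement by $e^{-2i\beta\int\mathcal{M}\,ds}+O(t^{-3/2})$ without further justification.
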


\begin{rem}
The results shown in Theorem \ref{prop-1.3} need the condition that the initial values $u_{0}(x), v_{0}(x) \in H^{1,1}(\mathbb{R})$, then the inverse scattering transform possesses  well mapping properties, and it guarantees the condition that there exists no discrete spectrum on the real axis. It is noted that the asymptotic results only depend on the $H^{1}(\mathbb{R})$ norm of $r$, therefore, for any $u_{0}(x), v_{0}(x)\in H^{1,1}(\mathbb{R})$ admitting the Assumption \eqref{assum}, the process of the large-time analysis and calculations shown in this work is unchanged.
\end{rem}



\section*{Acknowledgements}

This work was supported by  the National Natural Science Foundation of China under Grant No. 11975306, the Natural Science Foundation of Jiangsu Province under Grant No. BK20181351, the Six Talent Peaks Project in Jiangsu Province under Grant No. JY-059,  and the Fundamental Research Fund for the Central Universities under the Grant Nos. 2019ZDPY07 and 2019QNA35.

\section*{Appendix A: The parabolic cylinder model problem}
Here, we describe the solution of  parabolic cylinder model problem\cite{PC-model,PC-model-2}.
Define the contour $\Sigma^{pc}=\cup_{j=1}^{4}\Sigma_{j}^{pc}$ where
\begin{align}
\Sigma_{j}^{pc}=\left\{\lambda\in\mathbb{C}|\arg\lambda=\frac{2j-1}{4}\pi \right\}.\tag{A.1}
\end{align}
For $r_{0}\in \mathbb{C}$, let $\nu(r)=-\frac{1}{2\pi}\log(1+|r_{0}|^{2})$, consider the following parabolic cylinder model Riemann-Hilbert problem.
\begin{RHP}\label{PC-model}
Find a matrix-valued function $M^{(pc)}(\lambda)$ such that
\begin{align}
&\bullet \quad M^{(pc)}(\lambda)~ \text{is analytic in}~ \mathbb{C}\setminus\Sigma^{pc}, \tag{A.2}\\
&\bullet \quad M_{+}^{(pc)}(\lambda)=M_{-}^{(pc)}(\lambda)V^{(pc)}(\lambda),\quad
\lambda\in\Sigma^{pc}, \tag{A.3}\\
&\bullet \quad M^{(pc)}(\lambda)=I+\frac{M_{1}}{\lambda}+O(\lambda^{2}),\quad
\lambda\rightarrow\infty. \tag{A.4}
\end{align}
where
\begin{align}\label{Vpc}
V^{(pc)}(\lambda)=\left\{\begin{aligned}
\lambda^{i\nu\hat{\sigma}_{3}e^{-\frac{i\lambda^{2}}{4}
\hat{\sigma}_{3}}}\left(
                    \begin{array}{cc}
                      1 & 0 \\
                      r_{0} & 1 \\
                    \end{array}
                  \right),\quad \lambda\in\Sigma_{1}^{pc},\\
\lambda^{i\nu\hat{\sigma}_{3}e^{-\frac{i\lambda^{2}}{4}
\hat{\sigma}_{3}}}\left(
                    \begin{array}{cc}
                      1 & \frac{r^{*}_{0}}{1+|r_{0}|^{2}} \\
                      0 & 1 \\
                    \end{array}
                  \right),\quad \lambda\in\Sigma_{2}^{pc},\\
\lambda^{i\nu\hat{\sigma}_{3}e^{-\frac{i\lambda^{2}}{4}
\hat{\sigma}_{3}}}\left(
                    \begin{array}{cc}
                      1 & 0\\
                      \frac{r_{0}}{1+|r_{0}|^{2}} & 1 \\
                    \end{array}
                  \right),\quad \lambda\in\Sigma_{3}^{pc},\\
\lambda^{i\nu\hat{\sigma}_{3}e^{-\frac{i\lambda^{2}}{4}
\hat{\sigma}_{3}}}\left(
                    \begin{array}{cc}
                      1 & r^{*}_{0} \\
                      0 & 1 \\
                    \end{array}
                  \right),\quad \lambda\in\Sigma_{4}^{pc},
\end{aligned}\right.\tag{A.5}
\end{align}
\end{RHP}

\centerline{\begin{tikzpicture}[scale=0.6]
\draw[-][dashed](-4,0)--(-3,0);
\draw[-][dashed](-3,0)--(-2,0);
\draw[-][dashed](-2,0)--(-1,0);
\draw[-][dashed](-1,0)--(0,0);
\draw[-][dashed](0,0)--(1,0);
\draw[-][dashed](1,0)--(2,0);
\draw[-][dashed](2,0)--(3,0);
\draw[-][dashed](3,0)--(4,0);
\draw[-][thick](-4,-4)--(4,4);
\draw[-][thick](-4,4)--(4,-4);
\draw[->][thick](2,2)--(3,3);
\draw[->][thick](-4,4)--(-3,3);
\draw[->][thick](-4,-4)--(-3,-3);
\draw[->][thick](2,-2)--(3,-3);
\draw[fill] (3.2,3)node[below]{$\Sigma_{1}^{pc}$};
\draw[fill] (3.2,-3)node[above]{$\Sigma_{4}^{pc}$};
\draw[fill] (-3.2,3)node[below]{$\Sigma_{2}^{pc}$};
\draw[fill] (-2,-3)node[below]{$\Sigma_{3}^{pc}$};
\draw[fill] (0,0)node[below]{$0$};
\draw[fill] (1,0)node[below]{$\Omega_{6}$};
\draw[fill] (1,0)node[above]{$\Omega_{1}$};
\draw[fill] (0,-1)node[below]{$\Omega_{5}$};
\draw[fill] (0,1)node[above]{$\Omega_{2}$};
\draw[fill] (-1,0)node[below]{$\Omega_{4}$};
\draw[fill] (-1,0)node[above]{$\Omega_{3}$};
\draw[fill] (7,3)node[below]{$\lambda^{i\nu\hat{\sigma}_{3}}e^{-\frac{i\lambda^{2}}{4}\hat{\sigma}_{3}}
\left(
  \begin{array}{cc}
    1 & 0 \\
    r_{0} & 1 \\
  \end{array}
\right)
$};
\draw[fill] (7,-2)node[below]{$\lambda^{i\nu\hat{\sigma}_{3}}e^{-\frac{i\lambda^{2}}{4}\hat{\sigma}_{3}}
\left(
  \begin{array}{cc}
    1 & r^{*}_{0} \\
    0 & 1 \\
  \end{array}
\right)
$};
\draw[fill] (-7,2.5)node[below]{$\lambda^{i\nu\hat{\sigma}_{3}}e^{-\frac{i\lambda^{2}}{4}\hat{\sigma}_{3}}
\left(
  \begin{array}{cc}
    1 & \frac{r^{*}_{0}}{1+|r_{0}|^{2}} \\
    0 & 1 \\
  \end{array}
\right)
$};
\draw[fill] (-7,-1)node[below]{$\lambda^{i\nu\hat{\sigma}_{3}}e^{-\frac{i\lambda^{2}}{4}\hat{\sigma}_{3}}
\left(
  \begin{array}{cc}
    1 & 0 \\
    \frac{r_{0}}{1+|r_{0}|^{2}} & 1 \\
  \end{array}
\right)
$};
\end{tikzpicture}}
\centerline{\noindent {\small \textbf{Figure 7.} Jump matrix $V^{(pc)}$}.}

We know that the parabolic cylinder equation can be expressed as \cite{PC-equation}
\begin{align*}
\left(\frac{\partial^{2}}{\partial z}+(\frac{1}{2}-\frac{z^{2}}{2}+a)\right)D_{a}=0.
\end{align*}
As shown in the literature\cite{Deift-1993, PC-solution2}, we obtain the explicit solution $M^{(pc)}(\lambda, r_{0})$:
\begin{align*}
M^{(pc)}(\lambda, r_{0})=\Phi(\lambda, r_{0})\mathcal{P}(\lambda, r_{0})e^{\frac{i}{4}\lambda^{2}\sigma_{3}}\lambda^{-i\nu\sigma_{3}},
\end{align*}
where
\begin{align*}
\mathcal{P}(\lambda, r_{0})=\left\{\begin{aligned}
&\left(
                    \begin{array}{cc}
                      1 & 0 \\
                      -r_{0} & 1 \\
                    \end{array}
                  \right),\quad &\lambda\in\Omega_{1},\\
&\left(
                    \begin{array}{cc}
                      1 & -\frac{r^{*}_{0}}{1+|r_{0}|^{2}} \\
                      0 & 1 \\
                    \end{array}
                  \right),\quad &\lambda\in\Omega_{3},\\
&\left(
                    \begin{array}{cc}
                      1 & 0\\
                      \frac{r_{0}}{1+|r_{0}|^{2}} & 1 \\
                    \end{array}
                  \right),\quad &\lambda\in\Omega_{4},\\
&\left(
                    \begin{array}{cc}
                      1 & r^{*}_{0} \\
                      0 & 1 \\
                    \end{array}
                  \right),\quad &\lambda\in\Omega_{6},\\
&~~~\emph{I},\quad &\lambda\in\Omega_{2}\cup\Omega_{5},
\end{aligned}\right.
\end{align*}
and
\begin{align*}
\Phi(\lambda, r_{0})=\left\{\begin{aligned}
\left(
                    \begin{array}{cc}
                      e^{-\frac{3\pi\nu}{4}}D_{i\nu}\left( e^{-\frac{3i\pi}{4}}\lambda\right) & -i\beta_{12}e^{-\frac{\pi}{4}(\nu-i)}D_{-i\nu-1}\left( e^{-\frac{i\pi}{4}}\lambda\right) \\
                      i\beta_{21}e^{-\frac{3\pi(\nu+i)}{4}}D_{i\nu-1}\left( e^{-\frac{3i\pi}{4}}\lambda\right) & e^{\frac{\pi\nu}{4}}D_{-i\nu}\left( e^{-\frac{i\pi}{4}}\lambda\right) \\
                    \end{array}
                  \right),\quad \lambda\in\mathbb{C}^{+},\\
\left(
                    \begin{array}{cc}
                      e^{\frac{\pi\nu}{4}}D_{i\nu}\left( e^{\frac{i\pi}{4}}\lambda\right) & -i\beta_{12}e^{-\frac{3\pi(\nu-i)}{4}}D_{-i\nu-1}\left( e^{\frac{3i\pi}{4}}\lambda\right) \\
                      i\beta_{21}e^{\frac{\pi}{4}(\nu+i)}D_{i\nu-1}\left( e^{\frac{i\pi}{4}}\lambda\right) & e^{-\frac{3\pi\nu}{4}}D_{-i\nu}\left( e^{\frac{3i\pi}{4}}\lambda\right) \\
                    \end{array}
                  \right),\quad \lambda\in\mathbb{C}^{-},
\end{aligned}\right.
\end{align*}
with
\begin{align*}
\beta_{12}=\frac{\sqrt{2\pi}e^{i\pi/4}e^{-\pi\nu/2}}{r_0\Gamma(-i\nu)},\quad \beta_{21}=\frac{-\sqrt{2\pi}e^{-i\pi/4}e^{-\pi\nu/2}}{r_0^*\Gamma(i\nu)}=\frac{\nu}{\beta_{12}}.
\end{align*}
Then, it is not hard to obtain the asymptotic behavior of the solution by using the well known asymptotic behavior of $D_{a}(z)$,
\begin{align}\label{A-1}
M^{(pc)}(r_0,\lambda)=I+\frac{M_1^{(pc)}}{i\lambda}+O(\lambda^{-2}), \tag{A.6}
\end{align}
where
\begin{align*}
M_1^{(pc)}=\begin{pmatrix}0&\beta_{12}\\-\beta_{21}&0\end{pmatrix}.
\end{align*}

\section*{Appendix B: Detailed calculations for the pure $\bar{\partial}$-Problem  }
\begin{prop}
For $t>0$, there exists constants $c_{j}(j=1,2,3)$ such that $I_{j}(j=1,2,3)$  which defined in \eqref{7.8} and \eqref{7.9} possess the following estimate
\begin{align}\label{B-1}
I_{j}\leq c_{j}t^{-\frac{1}{4}},~~ j=1,2,3. \tag{B.1}
\end{align}
\end{prop}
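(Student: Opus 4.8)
The plan is to reduce each $I_j$ to an iterated real integral over the sector $\Omega_1$ and then play the decay of the oscillatory phase against the Cauchy kernel via Hölder's inequality with a carefully tuned exponent. First I would parametrize $\Omega_1$ by $s=z_0+u+iv$ with $u>0$, $0<v<u$, write $z=z_R+iz_I$ and set $a:=z_R-z_0$; on $\Omega_1$ one has $|e^{2it\theta}|=e^{-8tuv}$ by \eqref{theta}, $dA(s)=du\,dv$, and $|s-z|\ge|p-z_R|=|u-a|$. Fixing an exponent $q\in(2,4)$ with conjugate $q'=q/(q-1)\in(1,2)$, Hölder in the $v$-variable together with the two elementary bounds
\[
\int_0^u e^{-8tquv}\,dv\le\min\!\big(u,\tfrac{1}{8tqu}\big),\qquad
\int_{\mathbb R}\frac{dv}{|s-z|^{q'}}=C_{q'}\,|u-a|^{1-q'}
\]
yields, for any weight $g(u)$ that is constant in $v$, the building block
\[
\int_0^u\frac{g(u)\,e^{-8tuv}}{|s-z|}\,dv\ \lesssim\ g(u)\,\big[\min(u,\tfrac{1}{8tqu})\big]^{1/q}\,|u-a|^{-1/q},
\]
with an implied constant independent of $z$ and $t$; it then remains to integrate in $u$.

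For $I_1$ I would observe that $\bar\partial\chi_{\mathcal Z}$ is supported in the fixed compact set $K=\bigcup_k\overline{B(z_k,2\rho/3)}$, which is disjoint from $\mathbb R$ and from $z_0$: since $\rho\le\min_k\mathrm{Im}\,z_k$, every $s\in K\cap\Omega_1$ satisfies $v\ge\tfrac13\min_k\mathrm{Im}\,z_k=:c_0>0$, hence $uv>c_0^2$. Therefore $e^{-8tuv}\le e^{-8tc_0^2}$ on the support, and since $\iint_K|s-z|^{-1}\,dA(s)\le C$ uniformly in $z$ (a bounded region, integrable two-dimensional singularity), $I_1\lesssim e^{-8tc_0^2}=o(t^{-1/4})$. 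This case is essentially free.

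For $I_2$ I would apply the building block with $g(u)=|r'(z_0+u)|$ and then Cauchy–Schwarz in $u$, using $r'\in L^2(\mathbb R)$ (which follows from $u_0,v_0\in H^{1,1}(\mathbb R)$ through the mapping properties of the scattering transform):
\[
I_2\ \lesssim\ \|r'\|_{L^2}\Big(\int_0^\infty\big[\min(u,\tfrac{1}{8tqu})\big]^{2/q}|u-a|^{-2/q}\,du\Big)^{1/2}.
\]
The rescaling $u=(8tq)^{-1/2}w$ turns the inner integral into $(8tq)^{-1/2}\int_0^\infty[\min(w,1/w)]^{2/q}|w-\tilde a|^{-2/q}\,dw$ with $\tilde a=(8tq)^{1/2}a$, and the dimensionless integral is bounded uniformly in $\tilde a\ge0$ precisely because $q\in(2,4)$: the power $2/q<1$ makes the pole at $w=\tilde a$ locally integrable, $4/q>1$ gives the integrand the decay $w^{-4/q}$ at infinity, and the pole's contribution $\sim\tilde a^{\,1-4/q}$ stays bounded as $\tilde a\to\infty$. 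Hence $I_2\lesssim\|r'\|_{L^2}\,t^{-1/4}$. For $I_3$ I would bound $|s-z_0|^{-1/2}=(u^2+v^2)^{-1/4}\le u^{-1/2}$ and run the building block with $g(u)=u^{-1/2}$; the same rescaling extracts the factor $t^{-1/4}$ and leaves $\int_0^\infty w^{-1/2}[\min(w,1/w)]^{1/q}|w-\tilde a|^{-1/q}\,dw$, once more finite uniformly in $\tilde a$ for $q\in(2,4)$.

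The hard part will be not any individual computation but keeping every estimate uniform in the arbitrary point $z\in\mathbb C$ (equivalently in $a\in\mathbb R$), since the target bound $\|\mathrm S\|_{L^\infty\to L^\infty}\lesssim t^{-1/4}$ demands it. That uniformity is exactly what forces $q$ strictly between $2$ and $4$: a Hölder exponent $q\le2$ (i.e.\ $q'\ge2$) breaks down at the Cauchy pole $s=z$, whereas $q\ge4$ breaks down at the unbounded end of the sector $\Omega_1$. Only the balance between the quadratic phase $e^{-8tuv}$ and the Cauchy kernel survives both ends, and it is precisely this balance — concentration at scale $u\sim t^{-1/2}$ — that produces the rate $t^{-1/4}$ rather than the naive $t^{-1/2}$. (The analogous bound for $M_1^{(3)}$ in Lemma \ref{prop-7.2} is obtained the same way, with $|s-z|^{-1}$ removed, which is what upgrades $t^{-1/4}$ to $t^{-3/4}$ there.)
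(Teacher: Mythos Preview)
Your argument is correct and takes a genuinely different route from the paper's. The paper slices horizontally: on each line $\mathrm{Im}\,s=q$ it bounds $e^{-8tq(p-z_0)}\le e^{-8tq^2}$ (using $p-z_0\ge q$ on $\Omega_1$), then applies Cauchy--Schwarz in $p$ for $I_1$ and $I_2$ --- pairing $\|\bar\partial\chi_{\mathcal Z}\|_{L^2_p}$ or $\|r'\|_{L^2}$ against $\|1/(s-z)\|_{L^2_p}\lesssim|q-\eta|^{-1/2}$ --- and H\"older with $L^k$, $k>2$, for $I_3$, ending with the one-variable bound $\int_0^\infty e^{-8tq^2}|q-\eta|^{-1/2}\,dq\lesssim t^{-1/4}$. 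You slice vertically: H\"older in $v$ with a single tunable exponent $q\in(2,4)$, retain the full decay $e^{-8tuv}$, and pull out $t^{-1/4}$ by the rescaling $u\mapsto(8tq)^{-1/2}w$. Your version makes the uniformity in $z$ (through the parameter $\tilde a$) completely explicit, and your treatment of $I_1$ is both simpler and sharper: since $\bar\partial\chi_{\mathcal Z}$ is supported at distance at least $\rho/3$ from $\mathbb R$, one gets $I_1\lesssim e^{-ct}$ directly, whereas the paper runs $I_1$ through the same Cauchy--Schwarz machinery as $I_2$ and only records $t^{-1/4}$. Both the horizontal-slice and vertical-slice arguments are standard in the $\bar\partial$-steepest-descent literature; they trade off where the bookkeeping for uniformity in $z$ is done.
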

\begin{proof}
Let $s=p+iq$ and $z=\xi+i\eta$. Using the fact that
\begin{align*}
\Big|\Big|\frac{1}{s-z}\Big|\Big|_{L^{2}}(q+z_{0})=(\int_{q+z_{0}}^{\infty}\frac{1}{|s-z|^{2}}dp)^{\frac{1}{2}}
\leq\frac{\pi}{q-\eta},
\end{align*}
we can derive that
\begin{align}\label{B-2}
\begin{split}
|I_{1}|&\leq\int_{0}^{+\infty}\int_{q+z_{0}}^{+\infty}
\frac{|\bar{\partial}\chi_{\mathcal{Z}}(s)|e^{-8tq(p-z_{0})}}{|s-z|}dpdq\\
&\leq\int_{0}^{+\infty}e^{-8tq^{2}}\big|\big|\bar{\partial}\chi_{\mathcal{Z}}(s)\big|\big|_{L^{2}}(q+z_{0})
\Big|\Big|\frac{1}{s-z}\Big|\Big|_{L^{2}}(q+z_{0})dq \\
&\leq c_{1}\int_{0}^{+\infty}\frac{e^{-8tq^{2}}}{\sqrt{|q-\eta|}}dq
\leq c_{1}t^{-\frac{1}{4}}.
\end{split}\tag{B.2}
\end{align}
Similarly, considering that $r\in H^{1,1}(\mathbb{R})$, we obtain the estimate
\begin{align}\label{B-3}
|I_{2}|\leq\int_{0}^{+\infty}\int_{q+z_{0}}^{+\infty}
\frac{|r'(p)|e^{-8tq^{2}}}{|s-z|}dpdq
\leq c_{2}t^{-\frac{1}{4}}.\tag{B.3}
\end{align}
To obtain the estimate of $I_{3}$, we consider the following $L^{k}(k>2)$ norm
\begin{align}\label{B-4}
\Big|\Big|\frac{1}{\sqrt{|s-z_{0}|}}\Big|\Big|_{L^{k}}
\leq \left(\int_{q+z_{0}}^{+\infty}
\frac{1}{|p-z_{0}+iq|^{\frac{k}{2}}}dp\right)^{\frac{1}{k}}
\leq cq^{\frac{1}{k}-\frac{1}{2}}.\tag{B.4}
\end{align}
Similarly, we can derive that
\begin{align}\label{B-5}
\Big|\Big|\frac{1}{|s-z|}\Big|\Big|_{L^{k}}\leq c|q-\eta|^{\frac{1}{k}-\frac{1}{2}}.\tag{B.5}
\end{align}
By applying \eqref{B-4} and \eqref{B-4}, it is not hard to check that
\begin{align}\label{B-6}
\begin{split}
|I_{3}|&\leq\int_{0}^{+\infty}\int_{q}^{+\infty}
\frac{|z-z_{0}|^{-\frac{1}{2}}e^{-8tq(p-z_{0})}}{|s-z|}dpdq\\
&\leq\int_{0}^{+\infty}e^{-8tq^{2}}\Big|\Big|\frac{1}{\sqrt{|s-z_{0}|}}\Big|\Big|_{L^{k}}
\Big|\Big|\frac{1}{|s-z|}\Big|\Big|_{L^{k}}dq \leq c_{3}t^{-\frac{1}{4}}.
\end{split}\tag{B.6}
\end{align}
Now, we complete the estimates of $I_{j}(j=1,2,3)$.
\end{proof}

\renewcommand{\baselinestretch}{1.2}

\end{document}